\theoremstyle{plain}
\newtheorem{thm}{\indent\bf Theorem}[section]
\newtheorem{lem}[thm]{\indent\bf Lemma}
\newtheorem{prop}[thm]{\indent\bf Proposition}
\newtheorem{cor}[thm]{\indent\bf Corollary}
\newtheorem*{thma}{\indent\bf Theorem A}
\newtheorem*{thmb}{\indent\bf Theorem B}
\theoremstyle{definition}
\newtheorem{rem}{\indent\it Remark}[section]
\newtheorem{defi}{\indent\bf Definition}[section]
\numberwithin{equation}{section}
\numberwithin{figure}{section}
\def \d {\mathrm{d}}
\def \re {\mathrm{Re\,}}
\def \im {\mathrm{Im\,}}
\begin{document}
\title[First Painlev\'e transcendents]
{Explicit error term of the elliptic asymptotics 
for the first Painlev\'e transcendents}

\author[Shun Shimomura]{Shun Shimomura} 

\address{Department of Mathematics, Keio University, 
3-14-1, Hiyoshi, Kohoku-ku, Yokohama 223-8522 Japan\quad
{\tt shimomur@math.keio.ac.jp}}
 \date{}
\begin{abstract}
For the first Painlev\'e transcendents Kitaev established an asymptotic
representation in terms of the Weierstrass pe-function
in cheese-like strips tending along generic directions near the point 
at infinity. 
The error term of this asymptotic expression is presented by an explicit
formula, which leads to the error bound of exponent $-1$. 
\par
2020 {\it Mathematics Subject Classification.} 
{34M55, 34M56, 34M40, 34M60, 33E05.}
\par
{\it Key words and phrases.} 
{Boutroux ansatz; first Painlev\'e transcendents; 
elliptic asymptotics; monodromy data; 
Weierstrass pe-function.}
\end{abstract}
\maketitle
\allowdisplaybreaks

\section{Introduction}\label{sc1}
The first Painlev\'e equation 
\begin{equation}\label{1.1}
  y''= 6y^2+x 
\end{equation}
governs the isomonodromy deformation of the two-dimensional linear system
\begin{align}\label{1.2}
\frac{d\Psi}{d\xi}= & \Bigl( (4\xi^4 + x+ 2y^2)\sigma_3 
- i(4y \xi^2 + x+ 2y^2)\sigma_2 -(2y'\xi +\tfrac 12\xi^{-1})\sigma_1 
\Bigr)\Psi,
\\
\notag
& \sigma_1= \begin{pmatrix}  0 & 1 \\  1 & 0 \end{pmatrix},
\quad
 \sigma_2= \begin{pmatrix}  0 & -i  \\  i & 0 \end{pmatrix},
\quad
 \sigma_3= \begin{pmatrix}  1 & 0  \\  0 & -1 \end{pmatrix}
\end{align}
(\cite{JM}, \cite{Ka}, \cite{FIKN}). System \eqref{1.2} admits the canonical 
solutions $\Psi_k(\xi)$ $(k\in \mathbb{Z})$ represented by
$\Psi_k(\xi)=(I+O(\xi^{-1}))\exp((\tfrac 45 \xi^5+x\xi)\sigma_3)$
as $\xi\to \infty$ through the sector $-3\pi/{10} +\pi k/5<\arg \xi<\pi/{10}
+\pi k/5$, and the Stokes matrices
$$
S_{2l+1}=\begin{pmatrix} 1 & s_{2l+1} \\ 0 & 1 \end{pmatrix}, \quad  
S_{2l}=\begin{pmatrix} 1 & 0 \\ s_{2l}  & 1 \end{pmatrix}, \quad l=0,1,2,
\ldots
$$
are defined by $\Psi_{k+1}(\xi)=\Psi_k(\xi) S_k$.  
Each solution of \eqref{1.1} is parametrised by 
$(s_j)_{1\le j \le 5}$ on the manifold of monodromy data for \eqref{1.2}, 
which is a two-dimensional complex manifold in $\mathbb{C}^5$ given by 
$S_1S_2S_3S_4S_5=i\sigma_2$ 
(see \cite{Ka}, \cite{Ka-Ki}, \cite{K1}, \cite{K-3}).  
For a general solution thus parametrised, Kapaev \cite{Ka}
obtained asymptotic representations as $x \to \infty$ along the Stokes rays
$\arg x=\pi + 2\pi k/5$ $(k=0,\pm 1, \pm 2)$, and subsequent studies on related
connection formulas and the Stokes phenomenon are found in  
\cite{T1}, \cite{T2}, \cite{Ka-2004},
\cite{SV}, \cite{BSSV}, and \cite{L-R}, \cite{IM} for the $\tau$-function.
Along generic directions the Boutroux ansatz \cite{Boutroux} suggests the
asymptotic behaviour of a general solution of \eqref{1.1} expressed by the
Weierstrass $\wp$-function. An approach to such an expression was made by
Joshi and Kruskal \cite{J-K} by the method of multiple-scale expansions, 
the labelling of the solution by $(s_j)$ not being considered. Based on the
isomonodromy deformation of \eqref{1.2}, using WKB analysis, Kitaev \cite{K1},
\cite{K-3}, and Kapaev and Kitaev \cite{Ka-Ki} presented the elliptic 
asymptotic representation of a general solution of \eqref{1.1} 
as $x\to \infty$ through a cheese-like strip along any direction in the sector
$3\pi/5<\phi -2\pi k/5 <\pi,$ $k\in \mathbb{Z}.$ 
For $k=-2$ the asymptotic result is described as follows
\cite[p.~593, Section 8]{K1}, \cite[Theorem 2]{K-3}, \cite[Theorem 2]{Ka-Ki},  
in which $\omega_{\mathbf{a}},$ $\omega_{\mathbf{b}}$ and $A_{\phi}$ are,
respectively, the periods of an elliptic curve and a unique solution of the
Boutroux equations explained in {\bf (1)} at the end of this section.
\begin{thma}\label{thmA}
Let $y(x)$ be a solution of \eqref{1.1} corresponding to the monodromy data
$(s_j)_{1\le j\le 5}$ such that $s_1s_4\not=0,$ and let $\phi$ be such that 
$|\phi|<\pi/5.$
Then for any real numbers $c>0$ and $\varepsilon >0$ there exists a $\delta >0$
such that 
\begin{equation}\label{1.3}
y(x)=(e^{-i\phi}x)^{1/2}\left(\wp(e^{i\phi}t-t_0; g_2(\phi),g_3(\phi))
 +O(t^{-\delta})\right), \quad t=\tfrac 45 (e^{-i\phi}x)^{5/4}, 
\end{equation}
as $x\to \infty$ through $\mathcal{D}_{(s_j)}(\phi,c,\varepsilon),$ where
\begin{equation}\label{1.4}
t_0=\frac 1{2\pi i} \Bigl(\omega_{\mathbf{a}} \ln(is_1) +\omega_{\mathbf{b}}
\ln \frac{s_4}{s_1} \Bigr),
\end{equation}
$\wp(u;g_2,g_3)$ is the $\wp$-function such that
$(\wp_{u})^2=4\wp^3-g_2 \wp -g_3$ with $g_2(\phi)= -2e^{i\phi},$ $g_3(\phi)
=-A_{\phi}$, and $\mathcal{D}_{(s_j)}(\phi,c,\varepsilon)$ the cheese-like
strip defined by
$$
\bigl\{x=e^{i\phi}(\tfrac 54 t)^{4/5} \in \mathbb{C}\, : \,\,\, \re t>0, \,\,\,
|\im t|<c, \,\,\, |te^{i\phi} -t_0 +m\omega_{\mathbf{a}} +n\omega_{\mathbf{b}}|
>\varepsilon; \,\,\, m, n\in \mathbb{Z}\bigr\}.
$$
\end{thma}
\begin{rem}\label{rem1.1}
(1) The expression of   
$x\in\mathcal{D}_{(s_j)}(\phi,c,\varepsilon)$ (and that in 
Definition \ref{def2.1}) is in accordance with \cite[Theorem 2,~(1.38)]{K-3}, 
that is, each $x= e^{i\phi}(\tfrac 54 t)^{4/5} 
\in\mathcal{D}_{(s_j)}(\phi,c,\varepsilon)$ 
fulfils $|x|=(\tfrac 54|t|)^{4/5}$ and $\arg x=\phi +\tfrac 45 \arg t$, 
where $\arg t \ll |t|^{-1}$ as $t \to \infty$, since $|\im t|<c.$ 
The $\mathcal{D}_{(s_j)}(\phi,c,\varepsilon)$ contains the centre line
$\arg x=\phi.$ 
\par
(2) Let $\mathcal{S}(c):=\{ t\,|\, \re t>0, |\im t|<c\}$ be the strip
corresponding to $\mathcal{D}_{(s_j)}(\phi, c,\varepsilon)$. If 
$c>2(|\omega_{\mathbf{a}}|+|\omega_{\mathbf{b}}|)$, then there exists
a string of period parallelograms $\{L_N\}_{N=1}^{\infty}$ such that
$\bigcup_{N=1}^{\infty}L_N\subset \mathcal{S}(c)$ and that, for
every $N$, $(L_N)^{\mathrm{cl}} \cap (L_{N+1})^{\mathrm{cl}}$ is one of 
$[v^N_0,v^N_1]$, $[v^N_1, v^N_2]$, $[v^N_0, v^N_3],$ $[v^N_3, v^N_2],$ 
where $v^N_0,$ $v^N_1=v^N_0+\omega_{\mathbf{a}}$, 
$v^N_2=v^N_0+\omega_{\mathbf{a}}+\omega_{\mathbf{b}}$, 
$v^N_3=v^N_0+\omega_{\mathbf{b}}$ are the vertices of $L_N$, 
and $(L_N)^{\mathrm{cl}}$ denotes the closure of $L_N$. 
\par 
(3) Practically we may suppose $\varepsilon$ to be so small that, 
for all $(m,n) \in \mathbb{Z}^2$, the circles
$|te^{i\phi}-t_0+m\omega_{\mathbf{a}}+n\omega_{\mathbf{b}}|=\varepsilon$ are
disjoint.
\end{rem}
The leading term of \eqref{1.3} depends on the integration constant $t_0
=t_0((s_j)_{1\le j \le 5})$ only and the other one, which may be called 
the second integration constant, is hidden in the error term 
$O(t^{-\delta})$. Thus, in treating $y(x)$ as a general solution, it is 
preferable to know the explicit error term. 
This high-order part is also related, say, to
the $\tau$-function \cite[p.~121]{K-3} or to degeneration to trigonometric
asymptotics \cite[Section 4]{K-3}. For the $\tau$-function
associated with \eqref{1.1}, by the method of topological recursion, Iwaki 
\cite{Iwaki} obtained a conjectural full-order formal expansion yielding the
elliptic expression (see also \cite{IM}). 
\par
The justification procedure for the asymptotics of $y(x)$
\cite[pp.~105--106, pp.~120--121]{K-3}, \cite{K} is based on the leading terms 
of \eqref{3.1} and of the correction function $B(\phi,t)$, the pair of which
should depend on both integration constants labelling the solution $y(x)$. 
The second integration constant
is also related to $B(\phi,t)$ given in the following \cite[Theorem 4]{K-3}, 
where $J_{\mathbf{a}}$ and $\vartheta(z,\tau)$ with 
$\vartheta'(z,\tau)=(d/dz)\vartheta(z,\tau)$ are as in 
{\bf (1)} and {\bf (2)}. 
\begin{thmb}\label{thmB}
Let $B(\phi,t)= t((e^{-i\phi}x)^{-3/2}2\mathrm{H}(x,y(x),y'(x)) -A_{\phi})$,
where $\mathrm{H}(x,q,p)=p^2/2-2q^3-xq$ is the Hamiltonian of \eqref{1.1}. 
Then as $t \to \infty$ the function $B(\phi,t)$ is bounded uniformly
on the set $\mathcal{D}_{(s_j)}(\phi,c,\varepsilon)$ and is given by
\begin{equation}\label{1.5}
B(\phi,t)=\frac{8}{5\omega_{\mathbf{a}}} \Bigl(\ln \frac{s_1}{s_4} -\frac 54
tJ_{\mathbf{a}} +\pi i +\frac{\vartheta'}{\vartheta}\Bigl(\frac 1{\omega
_{\mathbf{a}}} (te^{i\phi} -t_0)+\nu ,\tau \Bigr) \Bigr)+O(t^{-\delta}),
\end{equation}
$\omega_{\mathbf{a}}$, $t_0$, $c$, $\varepsilon$ and $\delta$ being as 
in Theorem A.
\end{thmb}
\par
It is natural to seek explicitly the error term of \eqref{1.3}.
In this paper we present an explicit asymptotic representation of the 
error term, and this formula 
leads to the error bound $O(t^{-1})=O(x^{-5/4})$ in \eqref{1.3}.
The main results are
stated in Theorems \ref{thm2.1} and \ref{thm2.2}. Corollary
\ref{cor2.3} describes the dependence on the other integration
constant $\beta_0$ in this explicit error term. 
In Section \ref{sc4} these results are derived 
from the $t^{-\delta}$-asymptotics of $y(x)$ in Theorem A and 
of $B(\phi,t)$ in Theorem B combined with a system of integral equations 
equivalent to \eqref{1.1}, which is constructed in Section \ref{sc3}. 
Necessary lemmas in our argument are shown in Section \ref{sc5}. 
The final section is devoted to discussions on 
our method in deriving the explicit error with the bound $O(t^{-1})$, 
which is quite 
different from that in \cite{I-K}, \cite{J-K} and \cite{Novo}.
\par 
Throughout this paper we use the symbols summed up below 
(cf. \cite{K1}, \cite{K-3}).
\par
{\bf (1)} For each $\phi\in \mathbb{R}$, the condition
$$
\re \int_{\mathbf{c}} w d\lambda =0,
$$
where $\mathbf{c}$ is any cycle on the elliptic curve 
$w^2=\lambda^3+\tfrac 12 e^{i\phi}\lambda +\tfrac 14A,$ uniquely
determines $A=A_{\phi} \in \mathbb{C}$ \cite[Lemma 3]{K-3}, \cite[Section 7]{K1}
(The conditions for basic cycles $\mathbf{c}=\mathbf{c}_1,$ $\mathbf{c}_2$
are the Boutroux equations). Then $A_{\phi}$ is continuous in $\phi$ 
and smooth for $\phi \not=\pi +2\pi k/5,$ $(k\in \mathbb{Z})$, and fulfils
the relations $A_{-\phi}=\overline{A_{\phi}},$ $ A_{\phi +2\pi}=A_{\phi},$ 
$A_{\phi -2\pi k/5}= e^{2\pi i k/5}A_{\phi},$ and 
$A_{\pi}=-(\tfrac 23)^{3/2},$ $ 0.336 <A_0<0.380$ 
\cite[Lemma 4]{K-3}, \cite[Section 7]{K1}. Suppose that $|\phi|<\pi/5$, and
set $w=w(A_{\phi},\lambda)=\sqrt{\lambda^3+\tfrac 12 e^{i\phi}\lambda 
+\tfrac 14 A_{\phi}}=\sqrt{\lambda-\lambda_1}\,\sqrt{\lambda-\lambda_2}\,
\sqrt{\lambda-\lambda_3}.$ 
Let $\mathbf{a}$ and $\mathbf{b}$ be basic cycles as in Figure \ref{cycles}
on the elliptic curve $\Gamma=\Gamma_+ \cup \Gamma_-$ defined by $w(A_{\phi},
\lambda)$ such that the upper and lower sheets $\Gamma_+$ and $\Gamma_-$ are 
glued along the cuts
$[\lambda_1,\lambda_2]\cup[-\infty,\lambda_3]$; where the branch points 
$\lambda_j=\lambda_j(\phi)$ for $\phi$ around $\phi=0$ are specified 
in such a way that
$\re \lambda_1(0)>0,$ $\im \lambda_1(0)>0,$ 
$\lambda_2(0)=\overline{\lambda_1(0)},$ $\lambda_3(0)<0,$ and the branches of
$\sqrt{\lambda-\lambda_j}$ are determined by $\sqrt{\lambda-\lambda_j}
\to +\infty$ as $\lambda\to +\infty$ along the positive real axis on $\Gamma_+.$
{\small
\begin{figure}[htb]
\begin{center}
\unitlength=0.8mm
\begin{picture}(80,55)(-40,-25)
\put(17.5,13){\makebox{$\lambda_1$}}
\put(8,-15){\makebox{$\lambda_2$}}
\put(-10.5,-6.5){\makebox{$\lambda_3$}}
\thinlines
\put(15.5, -15){\line(0,1){30}}
\put(14.5,-15){\line(0,1){30}}
\put(-10,0.5){\line(-1,0){30}}
\put(-10,-0.5){\line(-1,0){30}}
 \qbezier(28,15) (30.1,10) (30.3,5)
 \qbezier(-18,13.6) (-13,15.2) (-7,12.8)
 \put(28,15){\vector(-2,3){0}}
 \put(-7,12.8){\vector(3,-2){0}}
 \put(30,1){\makebox{$\mathbf{a}$}}
 \put(-23,12){\makebox{$\mathbf{b}$}}
 \put(35,-25){\makebox{$\Gamma_{+}$}}
\thicklines
\put(15,15){\circle*{1.5}}
\put(15,-15){\circle*{1.5}}
\put(-10,0){\circle*{1.5}}
 \qbezier(5,15) (15,32) (25,15)
 \qbezier(5,15) (-1,0) (5,-15)
 \qbezier(25,15) (31,0) (25,-15)
 \qbezier(5,-15) (15,-32) (25,-15)

 \qbezier(-20.0,0.2) (-24,18) (0.7,7)
 \qbezier(14.714,-2.86) (12.0,1.0) (4,5.3)
 \qbezier[12](23,-15.6) (22.0,-9.8) (16.5,-4.0)
 \qbezier[12](23.5,-17.5) (25.5,-27) (15,-26)
 \qbezier[35](15,-26.0)(-10,-24)(-19.5,-1.5)

\end{picture}
\end{center}
\caption{Cycles $\mathbf{a}$ and $\mathbf{b}$}
\label{cycles}
\end{figure}
}
For the cycles $\mathbf{a}$ and $\mathbf{b}$ on $\Gamma$ write
\begin{align*}
& \omega_{\mathbf{a}}=\omega_{\mathbf{a}}(\phi)=\frac 12\int_{\mathbf{a}}
\frac{d\lambda}{w}, \quad
\omega_{\mathbf{b}}=\omega_{\mathbf{b}}(\phi)=\frac 12\int_{\mathbf{b}}
\frac{d\lambda}{w}, \quad
\\
& J_{\mathbf{a}}=J_{\mathbf{a}}(\phi)=2 \int_{\mathbf{a}} w d\lambda,
 \quad
 J_{\mathbf{b}}=J_{\mathbf{b}}(\phi)=2 \int_{\mathbf{b}} w d\lambda,
\end{align*}
where $\omega_{\mathbf{a}},$ $\omega_{\mathbf{b}}$ fulfil $\im 
\omega_{\mathbf{b}}/{\omega_{\mathbf{a}}} >0.$ For $\phi \not= \pi+2\pi k/5$
the elliptic curve $\Gamma$ with the cycles $\mathbf{a},$ $\mathbf{b}$ and
the integrals $\omega_{\mathbf{a},\,\mathbf{b}}$ and $J_{\mathbf{a},\,
\mathbf{b}}$ may be extended by the use of the substitution $(\phi, \lambda, w)
\mapsto (\phi+2\pi/5, e^{-4\pi i/5}\lambda, e^{-6\pi i/5}w)$ \cite[pp.~95--96]
{K-3}. 
\par
{\bf (2)} For $\tau=\omega_{\mathbf{b}}/\omega_{\mathbf{a}}$ such that 
$\im \tau>0$, 
$$
\vartheta(z,\tau)=\sum_{n\in \mathbb{Z}}e^{\pi i\tau n^2+2\pi izn}
$$ 
is the theta-function \cite{H}, \cite{WW} with 
$\nu=(1+\tau)/2,$ which satisfies $\vartheta(z\pm 1,
\tau)=\vartheta(z,\tau),$ $\vartheta(z\pm \tau,\tau)=e^{-\pi i(\tau\pm 2z)}
\vartheta(z,\tau).$ 
\par
{\bf (3)} We write $f \ll g$ or $g \gg f$ if $f=O(g)$, and $f \asymp g$
if $f\ll g$ and $g\ll f.$
\section{Main results}\label{sc2}
To state our results let us define some strips similar to $\mathcal{D}
_{(s_j)}(\phi,c,\varepsilon)$.
Recall the phase shift $t_0$ given by \eqref{1.4}, and the zeros $\lambda_j$
$(1\le j\le 3)$ of $w(A_{\phi},\lambda)^2$ specified as in {\bf (1)}.
\begin{defi}\label{def2.1}
For given $t_{\infty}>0,$ $c>0$ and small $\varepsilon>0$, set
\begin{align*}
 \mathcal{D}(\phi,t_{\infty},c ,\varepsilon )=&
  \{ x= e^{i\phi} (\tfrac 54 t)^{4/5}
\in \mathbb{C}\, :
\\
& \, \re t>t_{\infty},\,\,  |\im t|<c,\,\, |e^{i\phi}t-t_0-m\omega
_{\mathbf{a}}-n \omega_{\mathbf{b}}| >\varepsilon, \, m,n \in \mathbb{Z}\}, 
\\
 \check{\mathcal{D}}(\phi,t_{\infty},c ,\varepsilon )=&
  \mathcal{D}(\phi, t_{\infty},c,\varepsilon )
\setminus \bigcup_{1\le j\le 3;\, m,n\in \mathbb{Z}} 
\mathcal{V}_{j,m,n}(\varepsilon),
\\
\mathcal{V}_{j,m,n}(\varepsilon)
=& \{ x= e^{i\phi} (\tfrac 54 t)^{4/5}\in \mathbb{C}\,:\, |e^{i\phi}(t-t_j)
-t_0-m\omega_{\mathbf{a}} -n\omega_{\mathbf{b}} | \le \varepsilon   \},
\end{align*}
where $t_j$ $(1\le j\le 3)$ are such that $\wp(e^{i\phi}t_j -t_0; g_2(\phi),
g_3(\phi))=\lambda_j$ (for $x=e^{i\phi}(\tfrac 54t)^{4/5}$ 
in these domains, cf.~Remark \ref{rem1.1}). 
\end{defi}
Let $t_j^{m,n}\in \mathcal{S}(t_{\infty},c):=\{t\,|\, \re t >t_{\infty},
|\im t|<c\}$ $(j=1,2,3,\infty)$ be such that 
$t_j^{m,n}=t_j+e^{-i\phi}(t_0+m\omega_{\mathbf{a}}+n\omega_{\mathbf{b}})$ 
if $1\le j\le 3$, and that $t_{\infty}^{m,n}=e^{-i\phi}(t_0+m\omega_{\mathbf
{a}}+n\omega_{\mathbf{b}})$ if $j=\infty$.
We may suppose $\varepsilon$ so small that all the circles $|t-t_j^{m,n}|=
\varepsilon$ $(j=1,2,3,\infty; m,n\in \mathbf{Z})$ are disjoint.
Let $l(t^{m,n}_j)\in \mathcal{S}(t_{\infty},c)$ $(j=1,2,3,\infty)$ 
be the lines defined by $t=\re t^{m,n}_j +i\eta$, $\eta \ge \im 
t^{m,n}_j$ if $\im t^{m,n}_j \ge 0$ 
(respectively, $\eta \le \im t^{m,n}_j$ if $\im t^{m,n}_j <0$), and let
$l^x(t^{m,n}_j)$ be the images of $l(t^{m,n}_j)$ under the mapping 
$x=x(t)=e^{i\phi}(\tfrac 54 t)^{4/5}$.
\begin{defi}\label{def2.2}
For $t_{\infty},$ $c$, $\varepsilon$ as in Definition \ref{def2.1},
$\check{\mathcal{D}}_{\mathrm{cut}}(\phi,t_{\infty},c,\varepsilon)$ denotes
$\check{\mathcal{D}}(\phi,t_{\infty},c,\varepsilon)$ having cuts along  
$l^x(t^{m,n}_j)$ for all $j=1,2,3, \infty$ and $m,n \in \mathbb{Z},$ where
some local segments contained in
$l^x(t^{m,n}_j)$ are replaced with suitable arcs, if necessary, 
not to touch the image of another small circle $|t-t^{m',n'}_{j'}|=\varepsilon$
with $(j',m',n')\not=(j,m,n)$ under the mapping $x(t).$ 
\end{defi}
{\small
\begin{figure}[htb]
\begin{center}
\unitlength=0.8mm
\begin{picture}(80,75)(-40,-45)

  \qbezier (-27,11) (0,20) (27,29)
  \qbezier (-27,-29) (0,-20) (27,-11)

\put(20,10){\circle*{0.7}} \put(20,10){\circle{3}}
\put(20,-10){\circle*{0.7}} \put(20,-10){\circle{3}}
\put(-20,10){\circle*{0.7}} \put(-20,10){\circle{3}}
\put(-20,-10){\circle*{0.7}} \put(-20,-10){\circle{3}}
\put(0,10){\circle*{0.7}} \put(0,10){\circle{3}}
\put(0,-10){\circle*{0.7}} \put(0,-10){\circle{3}}

\put(0,0){\circle*{0.7}} \put(0,0){\circle{3}}
\put(20,0){\circle*{0.7}} \put(20,0){\circle{3}}
\put(20,20){\circle*{0.7}} \put(20,20){\circle{3}}
\put(-20,0){\circle*{0.7}} \put(-20,0){\circle{3}}
\put(-20,-20){\circle*{0.7}} \put(-20,-20){\circle{3}}

\put(10,0){\circle*{0.7}} \put(10,0){\circle{3}}
\put(10,10){\circle*{0.7}} \put(10,10){\circle{3}}
\put(10,20){\circle*{0.7}} \put(10,20){\circle{3}}
\put(10,-10){\circle*{0.7}} \put(10,-10){\circle{3}}

\put(-10,0){\circle*{0.7}} \put(-10,0){\circle{3}}
\put(-10,10){\circle*{0.7}} \put(-10,10){\circle{3}}
\put(-10,-20){\circle*{0.7}} \put(-10,-20){\circle{3}}
\put(-10,-10){\circle*{0.7}} \put(-10,-10){\circle{3}}
\put(-10,-20){\circle*{0.7}} \put(-10,-20){\circle{3}}

 \put(27,-1){\makebox{$x(t^{m,n}_j)$}}

\put(-20,-43){\makebox{(a) \quad$\check{\mathcal{D}}(\phi,t_{\infty},
c,\varepsilon)$}}
\end{picture}
\quad\quad
\begin{picture}(80,75)(-40,-45)

  \qbezier (-27,11) (0,20) (27,29)
  \qbezier (-27,-29) (0,-20) (27,-11)

\put(20,20){\circle*{0.7}} \put(20,20){\circle{3}}
\put(20,10){\circle*{0.7}} \put(20,10){\circle{3}}
\put(20,-10){\circle*{0.7}} \put(20,-10){\circle{3}}
\put(-20,10){\circle*{0.7}} \put(-20,10){\circle{3}}
\put(-20,-10){\circle*{0.7}} \put(-20,-10){\circle{3}}
\put(-20,-20){\circle*{0.7}} \put(-20,-20){\circle{3}}
\put(0,10){\circle*{0.7}} \put(0,10){\circle{3}}
\put(0,-10){\circle*{0.7}} \put(0,-10){\circle{3}}

\put(0,0){\circle*{0.7}} \put(0,0){\circle{3}}
\put(20,0){\circle*{0.7}} \put(20,0){\circle{3}}
\put(-20,0){\circle*{0.7}} \put(-20,0){\circle{3}}

\put(10,0){\circle*{0.7}} \put(10,0){\circle{3}}
\put(10,10){\circle*{0.7}} \put(10,10){\circle{3}}
\put(10,20){\circle*{0.7}} \put(10,20){\circle{3}}
\put(10,-10){\circle*{0.7}} \put(10,-10){\circle{3}}

\qbezier (-20.47,11.42) (-20.97,12.92)  (-21.03,13.10)
\qbezier (20.47,-11.42) (20.97,-12.92)  (21.03,-13.10)

\qbezier (9.53,21.42) (9.03,22.92)  (8.97,23.10)
\qbezier (-9.53,-21.42) (-9.03,-22.92)  (-8.97,-23.10)

\put(9.53,11.42){\line(-1,3){3.5}} 
\put(-9.53,-11.42){\line(1,-3){3.5}} 
\put(10.47,-1.42){\line(1,-3){4.5}} 
\put(-10.47,1.42){\line(-1,3){4.5}} 
\put(10.47,-11.42){\line(1,-3){1.5}} 
\put(-10.47,11.42){\line(-1,3){1.5}}

\put(-0.53,11.42){\line(-1,3){2.5}} 
\put(-0.53,1.42){\line(-1,3){5.5}} 
\put(0.47,-11.42){\line(1,-3){2.5}} 

\put(-20.47,1.42){\line(-1,3){3.5}} 
\put(20.47,-1.42){\line(1,-3){3.5}} 
\put(-19.47,-11.42){\line(1,-3){4.5}} 
\put(-19.47,-21.42){\line(1,-3){1.5}} 
\put(19.47,11.42){\line(-1,3){4.5}} 
\put(19.47,21.42){\line(-1,3){1.5}} 

\put(-10,0){\circle*{0.7}} \put(-10,0){\circle{3}}
\put(-10,10){\circle*{0.7}} \put(-10,10){\circle{3}}
\put(-10,-20){\circle*{0.7}} \put(-10,-20){\circle{3}}
\put(-10,-10){\circle*{0.7}} \put(-10,-10){\circle{3}}

 \put(28,-5){\makebox{$l^x(t^{m,n}_j)$}}

\put(-20,-43){\makebox{(b) \quad$\check{\mathcal{D}}_{\mathrm{cut}}
(\phi,t_{\infty},c,\varepsilon)$}}
\end{picture}
\end{center}
\caption{$\check{\mathcal{D}}(\phi,t_{\infty},c,\varepsilon)$ and 
$\check{\mathcal{D}}_{\mathrm{cut}}(\phi,t_{\infty},c,\varepsilon)$} 
\label{strips}
\end{figure}
}

These domains are strips lying in the
$x$-plane. For a function $f(x)$ we may deal with $\hat{f}(t)=f(x(t))$ with 
$t=\tfrac 45 (e^{-i\phi}x)^{5/4}$ or $\hat{f}(z)=f(x(z))$ with $z=e^{i\phi}t
=\tfrac 45 e^{i\phi}(e^{-i\phi}x)^{5/4}$ as $x\to \infty$, say, through
$\check{\mathcal{D}}_{\mathrm{cut}}(\phi,t_{\infty}, c,\varepsilon)$, and then
we write, in short, $f(t)$ or $f(z)$ in 
$\check{\mathcal{D}}_{\mathrm{cut}}(\phi,t_{\infty}, c,\varepsilon)$. 

Suppose that $|\phi|<\pi/5.$ Let $y(x)$ be a solution of \eqref{1.1}
corresponding to $(s_j)_{1\le j\le 5}$ such that $s_1s_4 \not=0.$
Then we have the following.
\begin{thm}\label{thm2.1}
Let $c>0$ and $\varepsilon>0$ be given positive numbers as in Theorem A. Then
$$
y(x)=(e^{-i\phi}x)^{1/2}
 \left(\wp(e^{i\phi}t-t_0; g_2(\phi),g_3(\phi))+O(t^{-1})\right)
$$
as $x=e^{i\phi}(\tfrac 54 t)^{4/5} \to \infty$ through $\mathcal{D}(\phi,
t_{\infty}, c,\varepsilon)$, where $t_{\infty}$ is sufficiently large. 
\end{thm}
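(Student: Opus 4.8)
The plan is to recast \eqref{1.1} as an autonomous equation perturbed by terms of size $t^{-1}$, and then to bootstrap the rough bound $O(t^{-\delta})$ of Theorem A up to $O(t^{-1})$ through the integral equations of Section 3. First I would introduce the Boutroux-type unknown $W(z)=(e^{-i\phi}x)^{-1/2}y(x)$ with $z=e^{i\phi}t$. A direct computation turns \eqref{1.1} into the exact equation
$$
W_{zz}=6W^2+e^{i\phi}-\frac1z W_z+\frac{4}{25 z^2}W,
$$
whose $z\to\infty$ limit $W_{zz}=6W^2+e^{i\phi}$ is solved by $\wp(z-t_0;g_2(\phi),g_3(\phi))$ with $g_2(\phi)=-2e^{i\phi}$: differentiating $(\wp_u)^2=4\wp^3-g_2\wp-g_3$ gives $\wp_{zz}=6\wp^2+e^{i\phi}$ (here and below $\wp_z$ denotes the $z$-derivative of $\wp(z-t_0;g_2(\phi),g_3(\phi))$), which recovers the leading term of Theorem A. Since the perturbation $-z^{-1}W_z+\tfrac{4}{25}z^{-2}W$ is $O(t^{-1})$ away from the poles of $\wp$, this is already the origin of the claimed exponent.

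Next I would set $W=\wp(z-t_0)+u$ and linearise. The linearisation of $6W^2$ about $\wp$ produces the Lam\'e operator $L=\partial_z^2-12\wp$, one of whose solutions is $\wp_z$ (differentiating $\wp_{zz}=6\wp^2+e^{i\phi}$ gives $(\wp_z)_{zz}=12\wp\,\wp_z$); a second, secularly growing solution is obtained by reduction of order. From these two solutions I would build the Green's function for $L$ and represent $u$ through the variation-of-parameters integral equation of Section 3, of the schematic form
$$
u=(\text{homogeneous part})+\mathcal{G}\Bigl[-\frac1z(\wp_z+u_z)+\frac{4}{25z^2}(\wp+u)+6u^2\Bigr],
$$
in which the inhomogeneous source carries the $t^{-1}$-size perturbation together with the contribution of the correction function $B_{\mathrm{as}}(\phi,t)$ of \cite{K-3}, the latter encoding the slow modulation of the elliptic parameters (the drift of $g_3$ and of the phase $t_0$).

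The bootstrap then runs as follows. Feeding the a priori bound $u=O(t^{-\delta})$ from Theorem A into the quadratic term gives $6u^2=O(t^{-2\delta})$, while the genuine source is $O(t^{-1})$; one iteration of the integral equation therefore replaces the exponent $\delta$ by $\min\{1,2\delta\}$, and, since the nonlinear exponent doubles at each stage ($2\delta$, $\min\{1,4\delta\}$, and so on), after finitely many steps the estimate saturates at $O(t^{-1})$. The $t^{-\delta}$-asymptotics of $B_{\mathrm{as}}(\phi,t)$ recorded in \cite{K-3} is precisely what guarantees that, at each stage, the source has no resonant component along $\wp_z$, i.e. the relevant period integrals cancel to the required order, so that $\mathcal{G}$ returns an $O(t^{-1})$ function rather than a secularly growing one.

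The main obstacle I expect is exactly this control of secular growth together with the poles of $\wp$. The second homogeneous solution of $L$ grows linearly in $z$, and $\wp_z$ blows up like $(z-t_0-m\omega_{\mathbf{a}}-n\omega_{\mathbf{b}})^{-3}$ at the lattice points, so integrating the $O(z^{-1})$ source against the Green's function across a strip that traverses many fundamental cells risks producing a non-decaying $O(1)$ contribution. This is why the domain $\mathcal{D}(\phi,c,\varepsilon)$ excises $\varepsilon$-neighbourhoods of the poles and why the modulation carried by $B_{\mathrm{as}}$ must be subtracted first. Making the cancellation of the resonant period integrals quantitative and uniform across $\mathcal{D}(\phi,c,\varepsilon)$, so that the accumulated $O(z^{-1})$ contributions stay $O(t^{-1})$ rather than summing to something larger, is the crux of the argument; the lemmas collected in the final section are presumably devoted to these uniform estimates.
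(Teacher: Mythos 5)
Your reduction to $W_{zz}=6W^2+e^{i\phi}-z^{-1}W_z+\tfrac{4}{25}z^{-2}W$ is correct, and your bootstrap skeleton (feeding the a priori $O(t^{-\delta})$ bound back into an integral equation until the exponent saturates at $1$) does match the architecture of the paper's argument in Subsection \ref{ssc4.1}. But the proposal has a genuine gap exactly at the point you yourself call the crux, and the specific cancellation you appeal to is false for the decomposition you chose. If you write $W=\wp(z-t_0)+u$ and invert the Lam\'e operator $L=\partial_z^2-12\wp$ by variation of parameters, the leading source term is $-z^{-1}\wp_z$, and the coefficient that multiplies the secularly growing homogeneous solution $\psi\asymp z$ is $\int^z \wp_z(\zeta)\,S(\zeta)\,d\zeta \approx -\int^z \wp_z(\zeta)^2\,\zeta^{-1}\,d\zeta=-\int^z P(\wp(\zeta))\,\zeta^{-1}\,d\zeta$. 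The period mean of $\wp_z^2=P(\wp)$ is $J_{\mathbf{a}}/\omega_{\mathbf{a}}$ (substitute $\lambda=\wp(\zeta)$, so that $\int_0^{\omega_{\mathbf{a}}}P(\wp)\,dz=\int_{\mathbf{a}}\sqrt{P(\lambda)}\,d\lambda=J_{\mathbf{a}}$), and the Boutroux equations force only $\re J_{\mathbf{a}}=0$, not $J_{\mathbf{a}}=0$. Hence this resonant integral diverges like $\log z$, cannot be based at $\infty$, and your operator $\mathcal{G}$ returns terms of size $z\log z$ at the very first iteration. The true solution is of course recovered through a cancellation between the two individually divergent halves of the variation-of-parameters formula, but a representation with that structure cannot be used to bootstrap. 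Saying that the modulation carried by $B_{\mathrm{as}}$ ``must be subtracted first'' points in the right direction, but in your scalar second-order setup the source you wrote down contains no $B_{\mathrm{as}}$ at all, and the proposal never specifies the subtraction; that subtraction \emph{is} the technical content of the proof.

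The paper avoids the Lam\'e Green's function entirely. It encodes the error as a shift of the argument, $v=\mathfrak{p}(z+h(z))$, and pairs $h$ with the deviation $\chi=b-\beta$ of the Hamiltonian-type quantity $a_\phi=A_\phi+t^{-1}B$ of system \eqref{3.1}; since \eqref{3.1} is first order, one obtains first-order integral equations for $(h,\chi)$ (Proposition \ref{prop3.3}) whose kernels $F,G,H,K,R$ contain $\beta$, i.e. $B_{\mathrm{as}}$, from the start. The counter-terms killing the nonzero period means are precisely the linear part $-\tfrac 54 e^{-i\phi}J_{\mathbf{a}}(z-t_0)$ inside $\beta$ together with the theta-quotient: Lemmas \ref{lem5.1}--\ref{lem5.5} exhibit bounded primitives for each kernel in $\check{\mathcal{D}}_{\mathrm{cut}}(\phi,c,\varepsilon)$, so every integral $\int_\infty^z(\cdot)\,\zeta^{-1}d\zeta$ is $O(z^{-1})$ by integration by parts, and no secular terms ever arise. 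Two further points your plan omits: the intermediate estimates live on the cut domain $\check{\mathcal{D}}_{\mathrm{cut}}$ (the kernels involve $\mathfrak{p}/\mathfrak{p}'$ and logarithmic primitives, Lemma \ref{lem5.2}, so discs about the zeros of $\mathfrak{p}'$ are excised and cuts introduced), and Theorem \ref{thm2.1} on all of $\mathcal{D}(\phi,c,\varepsilon)$ is then recovered by the maximum modulus principle applied to the holomorphic difference $(e^{-i\phi}x)^{-1/2}y-\wp$; moreover the bootstrap needs as input not only $h\ll z^{-\delta}$ from Theorem A but also $\chi=b-\beta\ll z^{-\delta}$ (Proposition \ref{prop3.2}), i.e. Kitaev's asymptotics for $B_{\mathrm{as}}$, which your scheme nowhere invokes quantitatively.
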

To deal with the error term explicitly, let us write \eqref{1.3} in the form
$$
y(x)=(e^{-i\phi}x)^{1/2} \wp(e^{i\phi}t-t_0 +h(e^{i\phi}t);g_2(\phi),g_3(\phi)).
$$
Set $z=e^{i\phi}t$ and write
\begin{align*}
& \mathfrak{p}=\mathfrak{p}(z)=\wp(z-t_0; g_2(\phi),g_3(\phi)),
\\
&\beta=\beta(z)=\frac{8e^{i\phi}}{5\omega_{\mathbf{a}}}\Bigl(\beta_0 -\frac 54
e^{-i\phi}J_{\mathbf{a}} (z-t_0) +\frac{\vartheta'}{\vartheta}\Bigl(\frac{z-t_0}
{\omega_{\mathbf{a}}} +\nu,\tau \Bigr) \Bigr),
\\
& \beta_0=\ln \frac{s_1}{s_4}-\frac 54 e^{-i\phi}J_{\mathbf{a}} t_0 +\pi i.
\end{align*}
The quantity  
$\beta(z)=e^{i\phi}B_{\mathrm{as}}(\phi,t)$ is the leading term of the
correction function $e^{i\phi}B(\phi,t)$ in Theorem B,
and $\beta(z)$ and $\mathfrak{p}(z)$ 
are bounded uniformly in $\mathcal{D}(\phi,t_{\infty},c,\varepsilon)$.
Then we have the following explicit expression of $h(z).$
\begin{thm}\label{thm2.2}
Let $P(\lambda)=4w(A_{\phi},\lambda)^2=4\lambda^3 +2e^{i\phi}\lambda+A_{\phi},$
and let $c,$ $\varepsilon$, $\delta$ be as in Theorem A.
Then as $x=e^{i\phi}(\tfrac 54 e^{-i\phi}z)^{4/5}
\to \infty$ through $\check{\mathcal{D}}_{\mathrm{cut}}(\phi,t_{\infty},
c,\varepsilon)$,
\begin{align*}
h(z)= &  \int^z_{\infty} F(\mathfrak{p},\beta)\frac{d\zeta}{\zeta}
-\int^z_{\infty} (F(\mathfrak{p},\beta)^2+G(\mathfrak{p},\beta)) \frac{d\zeta}{\zeta^2}
\\
& -\frac 1{10} \int^z_{\infty} K(\mathfrak{p},\beta) \mathcal{I}(\zeta) 
\frac{d\zeta}{\zeta} +O(z^{-1-\delta}),
\end{align*}
in which
\begin{align*}
F(\mathfrak{p},\beta) &=\frac{\beta}{2P(\mathfrak{p})}-\frac{2\mathfrak{p}}{5\sqrt{P(\mathfrak{p})}},
\quad
G(\mathfrak{p},\beta)=\frac{\beta^2}{8P(\mathfrak{p})^2},
\\
K(\mathfrak{p},\beta) &= 2(4e^{i\phi}\mathfrak{p} +3A_{\phi})F(\mathfrak{p},\beta)-\beta, \quad
\mathcal{I}(z)=\int^z_{\infty} \frac 1{P(\mathfrak{p})}\frac{d\zeta}{\zeta}
\end{align*}
with $\mathfrak{p}=\mathfrak{p}(\zeta),$ $\beta=\beta(\zeta)$, and in 
$\check{\mathcal{D}}_{\mathrm{cut}}(\phi,t_{\infty},c,\varepsilon)$ 
the integrals converge and are evaluated as 
\begin{align*}
&\mathcal{I}(z) \ll z^{-1}, \quad \int^z_{\infty} F(\mathfrak{p},\beta)\frac{d\zeta}
{\zeta} \ll z^{-1},
\\
&\int^z_{\infty}(F(\mathfrak{p},\beta)^2+G(\mathfrak{p},\beta))\frac{d\zeta}{\zeta^2}\ll z^{-1},
\quad  \int^z_{\infty} K(\mathfrak{p},\beta)\mathcal{I}(\zeta) \frac{d\zeta}{\zeta}
\ll z^{-1}.
\end{align*} 
\end{thm}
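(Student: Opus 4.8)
The plan is to turn the defining relation $y=(e^{-i\phi}x)^{1/2}\wp(z-t_0+h(z))$ into an exact scalar equation for $h$, recast it as a system of integral equations through an energy first integral, and then feed in Theorem A together with the boundedness of $\beta$ and $\mathfrak{p}$ to extract the explicit $O(z^{-1})$ terms by a single bootstrap step. First I would substitute the ansatz into \eqref{1.1}. Writing $v=(e^{-i\phi}x)^{-1/2}y$ and using $z=e^{i\phi}t$, $(e^{-i\phi}x)^{5/4}=\tfrac54 e^{-i\phi}z$, a direct computation reduces \eqref{1.1} to the Boutroux-scaled form $v_{zz}+\tfrac1z v_z-\tfrac{4}{25z^2}v=6v^2+e^{i\phi}$, which is equivalent to \eqref{1.1}. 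Since $g_2(\phi)=-2e^{i\phi}$ gives $\wp''=6\wp^2+e^{i\phi}$ and $(\wp')^2=P(\wp)$, substituting $v=\wp(u)$ with $u=z-t_0+h$ turns this into a nonlinear second-order equation for $h$. Rather than integrating it directly I would introduce the energy deviation $\mathcal{A}(z):=v_z^2-P(v)$, which satisfies the exact relations
$$v_z^2=P(v)+\mathcal{A},\qquad \mathcal{A}_z+\frac2z\mathcal{A}=-\frac2z P(v)+\frac{8}{25z^2}v v_z,$$
while the phase obeys $(1+h')^2=1+\mathcal{A}/P(v)$, i.e. $h'=\sqrt{1+\mathcal{A}/P(v)}-1$. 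Integrating the linear equation for $\mathcal{A}$ and the relation for $h'$ from $z=\infty$, where $\mathcal{A},h\to0$ by Theorem A, yields the system of integral equations equivalent to \eqref{1.1} announced for Section 3.

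The key identification is $z\mathcal{A}=\beta-\tfrac45\mathfrak{p}\sqrt{P(\mathfrak{p})}+O(z^{-\delta})$. Here I would use that $\beta$ is, up to normalisation, an antiderivative of $\mathfrak{p}$: from the identity $\tfrac{d^2}{dz^2}\log\vartheta((z-t_0)/\omega_{\mathbf{a}}+\nu,\tau)=\mathrm{const}-\mathfrak{p}$ together with the definition of $J_{\mathbf{a}}=2\int_{\mathbf{a}}w\,d\lambda$ one gets $\beta'=-\tfrac{8}{5}e^{i\phi}\mathfrak{p}+\mathrm{const}$, whence $(\beta-\tfrac45\mathfrak{p}\sqrt{P})'=-2P(\mathfrak{p})+\mathrm{const}$, matching the leading part of $(z\mathcal{A})_z=-2P(v)-\mathcal{A}+\tfrac{8}{25z}vv_z$ coming from the modulation equation. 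The Boutroux equations $\re\int_{\mathbf{a}}w\,d\lambda=\re\int_{\mathbf{b}}w\,d\lambda=0$ are what guarantee that the relevant mean values along the ray $\arg z=\phi$ vanish, forcing the residual constant to be zero, so that $\beta-\tfrac45\mathfrak{p}\sqrt{P}$ is bounded and equals $z\mathcal{A}$ to leading order; this is also where the boundedness of $\beta,\mathfrak{p}$ from \cite[Theorem 4]{K-3} enters. Expanding $h'=\tfrac{\mathcal{A}}{2P(v)}-\tfrac{\mathcal{A}^2}{8P(v)^2}+O(\mathcal{A}^3)$, replacing $P(v)=P(\mathfrak{p})+P'(\mathfrak{p})\sqrt{P}\,h+\cdots$, and re-inserting the leading $h\sim\int_\infty^z F\,d\zeta/\zeta$ into the $-2\zeta P'\sqrt{P}\,h$ forcing of $\mathcal{A}$, I would collect all contributions: the term $\mathcal{A}/(2P(v))$ produces $F/\zeta$, the order-$\zeta^{-2}$ pieces assemble into $-(F^2+G)/\zeta^2$, and the back-reaction of $h$ on the energy produces the double integral $-\tfrac1{10}K\mathcal{I}/\zeta$, with $\mathcal{I}=\int_\infty^z P(\mathfrak{p})^{-1}\,d\zeta/\zeta$. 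Integrating term by term gives the stated formula with remainder $O(z^{-1-\delta})$.

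For the bounds, the integrals $\int_\infty^z(F^2+G)\,d\zeta/\zeta^2$ and $\int_\infty^z K\mathcal{I}\,d\zeta/\zeta$ are $O(z^{-1})$ merely by size once $\mathcal{I}\ll z^{-1}$ is known, since their integrands then decay like $\zeta^{-2}$. The two genuinely subtle bounds are $\mathcal{I}\ll z^{-1}$ and $\int_\infty^z F\,d\zeta/\zeta\ll z^{-1}$: here I would show, as lemmas for the final section, that $1/P(\mathfrak{p})$ and $F$ are quasi-periodic with vanishing mean along the ray (a consequence of the Boutroux equations), so that their antiderivatives in $\zeta$ stay bounded; an integration by parts in $\int_\infty^z(\cdot)\,d\zeta/\zeta$ then gives $O(z^{-1})$. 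The main obstacle I anticipate is making all of this uniform near the branch points $\mathfrak{p}=\lambda_j$ (the zeros of $P$, where $1/\sqrt{P}$ and $1/P$ blow up and the phase-shift ansatz is singular) and near the lattice points; this is precisely why one works in $\check{\mathcal{D}}_{\mathrm{cut}}(\phi,c,\varepsilon)$, removing the discs $\mathcal{V}_{j,m,n}(\varepsilon)$ and cutting along the $l(t^{m,n}_j)$ so that the branches of $\sqrt{P}$ are single-valued and the integrals converge. Controlling the local singular behaviour on these cut domains, and verifying that the bootstrap from the a priori $O(t^{-\delta})$ of Theorem A to the sharp $O(z^{-1})$ survives the nonlinear terms, is the crux of the argument.
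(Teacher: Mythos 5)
Your overall architecture (energy--phase decomposition, integral equations, bootstrap from Theorem A, quasi-periodicity lemmas for the critical bounds) parallels the paper's Sections \ref{sc3}--\ref{sc5}: your $z\mathcal{A}$ is, up to the $\tfrac 25 z^{-1}v$ shift, the paper's correction function $b$, and your energy/phase system is essentially \eqref{3.1}. But there is a genuine gap at what you yourself call the key identification, $z\mathcal{A}=\beta-\tfrac45\mathfrak{p}\sqrt{P(\mathfrak{p})}+O(z^{-\delta})$. Your argument shows only that $(z\mathcal{A})_z$ and $\bigl(\beta-\tfrac45\mathfrak{p}\sqrt{P(\mathfrak{p})}\bigr)_z$ agree to leading order, hence that the two quantities differ by an approximate constant $c$; no Boutroux/mean-value argument can show $c=0$. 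The Boutroux equations determine the modulus $A_{\phi}$ only, whereas the constant at issue is the \emph{second} integration constant: $\beta$ carries the monodromy-dependent value $\beta_0=\log(s_1/s_4)-\tfrac 54 e^{-i\phi}J_{\mathbf{a}}t_0+\pi i$, and no averaging along the ray can recover data encoded in $(s_j)$. Nor can Theorem A fix $c$: replacing $\beta$ by $\beta+c$ changes $h$ only by terms such as $\int_{\infty}^{z}\tfrac{c}{2P(\mathfrak{p})}\tfrac{d\zeta}{\zeta}\ll z^{-1}$ (Lemma \ref{lem5.5}), i.e.\ at exactly the order of the formula you are trying to prove, so the a priori bound $h\ll z^{-\delta}$ is compatible with \emph{every} value of $c$. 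That the constant genuinely enters at this order is visible in Corollary \ref{cor2.3}, where $zh(z)$ contains $h_0\beta_0^2$ with $h_0\neq 0$. The paper closes this gap by invoking Kitaev's asymptotics of the correction function, $b(z)-\beta(z)=e^{i\phi}(B(\phi,t)-B_{\mathrm{as}}(\phi,t))\ll z^{-\delta}$ (Proposition \ref{prop3.2}, citing \cite[Theorem 4]{K-3}); this isomonodromy input is indispensable, while you cite that theorem only for the boundedness of $\beta$ and $\mathfrak{p}$, which is not enough.

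Two further, more repairable, points. First, a single bootstrap step cannot reach the stated error: one pass through the integral equation upgrades $h\ll z^{-\delta}$ only to $h\ll |z^{-1}|+|z^{-2\delta}|$, and since $\delta$ is small one must iterate, as in Subsection \ref{ssc4.1}, $m_0$ times with $m_0\delta<1\le(m_0+1)\delta$ before the representation with remainder $O(z^{-1-\delta})$ is legitimate; your opening claim of ``a single bootstrap step'' contradicts your own later remark that the bootstrap is the crux. Second, for the estimate $\int_{\infty}^{z}F(\mathfrak{p},\beta)\,\tfrac{d\zeta}{\zeta}\ll z^{-1}$, ``vanishing mean plus bounded antiderivative'' is not sufficient as stated: $F$ contains $\beta/(2P(\mathfrak{p}))$, a \emph{product} of two quasi-periodic factors, and producing a bounded primitive for such a product is exactly what requires the antisymmetrization device of Lemma \ref{lem5.3} (integration by parts applied to the symmetric combination $(\mathfrak{b}(\sigma-\alpha_0)+\mathfrak{b}(\sigma+\alpha_0))_{\sigma}\mathfrak{b}(\sigma)$), together with Lemma \ref{lem5.4}; a generic mean-value statement for the factors separately does not yield it.
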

\begin{rem}\label{rem2.1}
For the sake of convenience the integrals are considered in 
$\check{\mathcal{D}}_{\mathrm{cut}}(\phi,t_{\infty},c,\varepsilon)$,
which is simply connected, to avoid the 
possible multi-valuedness around poles of the integrands. Then 
$h(z)$ is considered substantially along the ray $\arg x=\phi$ and expressed 
by the $\wp$-function with $(g_2(\phi),g_3(\phi))=(-2e^{i\phi},-A_{\phi})$. 
\end{rem}
The integration constant $\beta_0$ appears in $h(z)$ as described in the 
following.
\begin{cor}\label{cor2.3}
In $\check{\mathcal{D}}_{\mathrm{cut}}(\phi,t_{\infty},c,\varepsilon)$  
we have
$zh(z)=h_0 \beta_0^2 + h_1(z) \beta_0 +h_2(z)+O(z^{-\delta})$
with $h_1(z)=O(1),$ $h_2(z)=O(1)$ and 
$$
h_0= -\frac {24}{5\omega_{\mathbf{a}}^2} e^{2i\phi}(8e^{3i\phi}+27A_{\phi}
^2)^{-1}. 
$$
\end{cor} 
\begin{rem}\label{rem2.3}
Along any direction in the sector $3\pi/5<\phi-2\pi k/5<\pi,$ $k\in\mathbb{Z}$,
the formulas of the main results are written in terms of $t_0$ and $\beta_0$ 
with $(s_{2-2k}, s_{5-2k}/s_{2-2k})$ in place of $(s_1, s_4/s_1)$ 
(cf.~\cite[Theorems 2 and 4]{K-3}). This discontinuity of integration 
constants may also be considered as a nonlinear Stokes phenomenon 
\cite[p.~379]{FIKN}. 
\end{rem}
\begin{rem}\label{rem2.4}
For a given simply connected unbounded subdomain $\mathcal{D}_0 \subset
\check{\mathcal{D}}(\phi,t_{\infty},c,\varepsilon)$, Theorem \ref{thm2.2} and 
Corollary \ref{cor2.3} are also valid as $x\to \infty$ through $\mathcal{D}_0.$
\end{rem}
\section{System of equations and integral representations}\label{sc3}
Let $c$, $\varepsilon$ and $\delta$ be as in Theorem A. The number $t_{\infty}$
is retaken if necessary, being denoted by the same letter $t_{\infty}$ in
each appearance in our argument.
In the proofs of our theorems we may suppose that $0<\delta <1.$
\par
By the change of variables $t=\tfrac 45 (e^{-i\phi}x)^{5/4},$
$y=(e^{-i\phi}x)^{1/2}v$, equation \eqref{1.1} or $(d/dx)((y')^2-4y^3-2xy)=-2y$ 
is taken to
$$
t\frac d{dt}a_{\phi} +\frac 65 a_{\phi}= -\frac 85 e^{i\phi}v
$$
with
$$
a_{\phi}=e^{-2i\phi}\bigl(v_t+(2/5)t^{-1}v\bigr)^2 -4v^3-2e^{i\phi}v,
$$
which is the Lagrangian or the Hamiltonian 
$2(e^{-i\phi}x)^{-3/2}\mathrm{H}(x,y(x),y'(x))$ appearing in Theorem B,  
\cite[Theorem 4]{K-3}. 
Setting $a_{\phi}=A_{\phi}+t^{-1}B(\phi,t)$ with $e^{i\phi}t=z,$
$e^{i\phi}B(\phi,t)=b,$ we have the system of equations 
\begin{equation}\label{3.1}
\begin{split}
&\bigl(v_z +(2/5) z^{-1}v\bigr)^2=4v^3+2 e^{i\phi}v+ A_{\phi}+z^{-1}b,
\\
&b_z= -\frac 85e^{i\phi}v -\frac 65 A_{\phi} -\frac 15 z^{-1}b.
\end{split}
\end{equation}
Note that \eqref{1.5} is written as $B(\phi,t)=B_{\mathrm{as}}(\phi,t)
 +O(t^{-\delta})$. The system
\begin{equation}\label{3.2}
\begin{split}
&\mathfrak{p}_z^2 =4\mathfrak{p}^3+2e^{i\phi}\mathfrak{p} +A_{\phi},
\\
&\beta_z= -\frac 85e^{i\phi}\mathfrak{p} -\frac 65 A_{\phi}
\end{split}
\end{equation}
admits a solution $(\mathfrak{p},\beta)=(\mathfrak{p}(z),\beta(z))
=(\wp(z-t_0;g_2(\phi), g_3(\phi)), e^{i\phi}B_{\mathrm{as}}(\phi,e^{-i\phi}z))$,
where validity of the second equation is due to
\cite[Proposition 6, (3.13)]{K-3}. System \eqref{3.2} is, at least formally, 
an approximation to \eqref{3.1}. 
\begin{prop}\label{prop3.1}
For $y(x)$ given by \eqref{1.3} set $\mathfrak{p}(z+h(z))=(e^{-i\phi}x)
^{-1/2}y(x)$ with $e^{i\phi}t=z$ in $\check{\mathcal{D}}_{\mathrm{cut}}
(\phi,t_{\infty},c,\varepsilon)$. Then $(v,b)=(\mathfrak{p}(z+h(z)), 
e^{i\phi}B(\phi,e^{-i\phi}z))$ solves \eqref{3.1}.
\end{prop}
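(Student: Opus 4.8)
The plan is to read Proposition \ref{prop3.1} off directly from the Hamiltonian-type equation recorded at the beginning of this section, by carefully tracking the substitutions $z=e^{i\phi}t$, $b=e^{i\phi}B(\phi,t)$ and $a_\phi=A_\phi+t^{-1}B(\phi,t)$. Since $y(x)$ solves \eqref{1.1}, the change of variables $t=\tfrac45(e^{-i\phi}x)^{5/4}$, $y=(e^{-i\phi}x)^{1/2}v$ already makes $v=(e^{-i\phi}x)^{-1/2}y(x)$ satisfy
\[
t\frac{d}{dt}a_\phi+\frac65 a_\phi=-\frac85 e^{i\phi}v,\qquad
a_\phi=e^{-2i\phi}\bigl(v_t+(2/5)t^{-1}v\bigr)^2-4v^3-2e^{i\phi}v .
\]
The equality $v=\mathfrak{p}(z+h(z))$ is merely the representation \eqref{1.3} rewritten with $h$, i.e.\ $h(z)$ is defined by $\wp(z-t_0+h(z);g_2(\phi),g_3(\phi))=v$; this $h$ is single-valued on the simply connected region $\check{\mathcal{D}}_{\mathrm{cut}}(\phi,c,\varepsilon)$, so it suffices to verify the two lines of \eqref{3.1} for this $v$ together with $b=e^{i\phi}B(\phi,t)$.

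For the first line I would note that $z=e^{i\phi}t$ gives $v_z=e^{-i\phi}v_t$ and $z^{-1}=e^{-i\phi}t^{-1}$, whence $v_z+(2/5)z^{-1}v=e^{-i\phi}\bigl(v_t+(2/5)t^{-1}v\bigr)$ and therefore $\bigl(v_z+(2/5)z^{-1}v\bigr)^2=e^{-2i\phi}\bigl(v_t+(2/5)t^{-1}v\bigr)^2$. At the same time $z^{-1}b=(e^{i\phi}t)^{-1}e^{i\phi}B(\phi,t)=t^{-1}B(\phi,t)$, so that $A_\phi+z^{-1}b=A_\phi+t^{-1}B(\phi,t)=a_\phi$. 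Substituting the defining formula for $a_\phi$ turns the first line of \eqref{3.1} into the identity $a_\phi=e^{-2i\phi}(v_t+(2/5)t^{-1}v)^2-4v^3-2e^{i\phi}v$, which holds by the very definition of $a_\phi$; hence the first line is automatic for any such $v$ once $b$ is defined as above.

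The content lies in the second line. Because $z=e^{i\phi}t$ is a pure scaling, the Euler operator is unchanged, $t\,d/dt=z\,d/dz$, and from $a_\phi=A_\phi+z^{-1}b$ (with $A_\phi$ independent of $t$) I compute
\[
t\frac{d}{dt}a_\phi=z\frac{d}{dz}\bigl(A_\phi+z^{-1}b\bigr)=z\bigl(-z^{-2}b+z^{-1}b_z\bigr)=b_z-z^{-1}b .
\]
Inserting this and $a_\phi=A_\phi+z^{-1}b$ into $t\,(d/dt)a_\phi+\tfrac65 a_\phi=-\tfrac85 e^{i\phi}v$ gives
\[
b_z-z^{-1}b+\frac65\bigl(A_\phi+z^{-1}b\bigr)=-\frac85 e^{i\phi}v ,
\]
and since $(-1+\tfrac65)z^{-1}b=\tfrac15 z^{-1}b$, solving for $b_z$ yields precisely $b_z=-\tfrac85 e^{i\phi}v-\tfrac65 A_\phi-\tfrac15 z^{-1}b$, the second line of \eqref{3.1}.

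Thus both lines hold. The argument is essentially a bookkeeping exercise, and the only point requiring care is the consistent passage between the $t$- and $z$-variables, namely the factors $e^{\pm i\phi}$ and the identity $t\,d/dt=z\,d/dz$. The fact that the scalar Hamiltonian equation is genuinely equivalent to \eqref{1.1} is already established in the derivation opening this section, so granting it there is no serious obstacle remaining; I would merely record the two substitutions above.
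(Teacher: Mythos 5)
Your proposal is correct and follows essentially the same route as the paper: the paper gives no separate proof of Proposition \ref{prop3.1}, treating it as immediate from the derivation of \eqref{3.1} that opens Section \ref{sc3} (the change of variables taking \eqref{1.1} to the Hamiltonian equation, followed by the substitutions $a_\phi=A_\phi+t^{-1}B(\phi,t)$, $z=e^{i\phi}t$, $b=e^{i\phi}B(\phi,t)$), and your argument simply makes that bookkeeping explicit. In particular, your two observations --- that the first line of \eqref{3.1} is automatic from the definition of $b$, while the second line follows from the Euler-operator identity $t\,d/dt=z\,d/dz$ applied to the Hamiltonian equation --- are exactly the computations the paper leaves implicit.
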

By \eqref{1.3} and \cite[Theorem 4]{K-3} with
$b(z)-\beta(z)=e^{i\phi}(B(\phi,e^{-i\phi}z)-B_{\mathrm{as}}(\phi,
e^{-i\phi}z))$, we have the following.
\begin{prop}\label{prop3.2}
In $\check{\mathcal{D}}_{\mathrm{cut}}(\phi,t_{\infty},c,\varepsilon)$,
$h(z) \ll z^{-\delta},$ and $b(z)$ is bounded and satisfies
$b(z)-\beta(z) \ll z^{-\delta}.$
\end{prop}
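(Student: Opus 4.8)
The plan is to read off both assertions from Theorem A and \cite[Theorem 4]{K-3}, using that $\check{\mathcal{D}}_{\mathrm{cut}}(\phi,c,\varepsilon)$ has been carved out precisely so that on it $\mathfrak{p}(z)$ stays bounded away from its poles while $\mathfrak{p}_z(z)$ stays bounded away from $0$. Indeed, by the defining relation in Proposition \ref{prop3.1} we have $\mathfrak{p}(z+h(z))=(e^{-i\phi}x)^{-1/2}y(x)$, and inserting \eqref{1.3} with $z=e^{i\phi}t$ turns this into
$$
\mathfrak{p}(z+h(z))-\mathfrak{p}(z)=O(t^{-\delta})=O(z^{-\delta}).
$$
Thus the whole task for $h$ is to convert this additive error on the values of $\mathfrak{p}$ into an error on its argument.

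First I would establish the two uniform bounds on the curve $\mathfrak{p}$. Removing the neighbourhoods $\mathcal{V}_{j,m,n}(\varepsilon)$ of the points where $\mathfrak{p}=\lambda_j$ excludes exactly the zeros of $P(\mathfrak{p})$, so that $\mathfrak{p}_z=\sqrt{P(\mathfrak{p})}$ satisfies $|\mathfrak{p}_z|\gg 1$ there; the condition $|e^{i\phi}t-t_0-m\omega_{\mathbf{a}}-n\omega_{\mathbf{b}}|>\varepsilon$ defining $\mathcal{D}(\phi,c,\varepsilon)$ keeps $z-t_0$ away from the period lattice, so $\mathfrak{p}(z)$ stays bounded. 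On such a region $\mathfrak{p}$ is locally biholomorphic with inverse of uniformly bounded derivative, and the cuts along the lines $l(t^{m,n}_j)$, which render $\check{\mathcal{D}}_{\mathrm{cut}}(\phi,c,\varepsilon)$ simply connected, fix a single-valued branch. A mean-value (inverse function) estimate applied to the displayed difference, namely $\mathfrak{p}(z+h)-\mathfrak{p}(z)=\mathfrak{p}_z(z)\,h\,(1+O(h))$, then yields $h(z)\ll z^{-\delta}$, which is the first assertion.

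The estimate for $b$ is immediate. By construction $b(z)=e^{i\phi}B(\phi,t)$ and $\beta(z)=e^{i\phi}B_{\mathrm{as}}(\phi,t)$, so $b(z)-\beta(z)=e^{i\phi}(B(\phi,t)-B_{\mathrm{as}}(\phi,t))$, and \cite[Theorem 4]{K-3} gives $B(\phi,t)-B_{\mathrm{as}}(\phi,t)\ll t^{-\delta}$, whence $b(z)-\beta(z)\ll z^{-\delta}$. Since $\beta(z)$ is already known to be bounded in $\mathcal{D}(\phi,c,\varepsilon)$, the boundedness of $b(z)$ follows at once.

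The one genuinely delicate point, and hence the main obstacle, is the uniformity in the first step: one must be sure that the lower bound $|\mathfrak{p}_z|\gg 1$ and the upper bound on $|\mathfrak{p}|$ persist uniformly as $z\to\infty$ along the strip, and that $z+h(z)$ never wanders out of the region of invertibility of $\mathfrak{p}$. This is exactly what the $\varepsilon$-excision of poles and branch points, together with the simple connectedness of $\check{\mathcal{D}}_{\mathrm{cut}}(\phi,c,\varepsilon)$, is designed to guarantee; once that geometry is secured the bound on $h$ follows from the near-linearity of $\mathfrak{p}$ on the cut domain.
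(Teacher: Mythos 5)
Your proposal is correct and follows essentially the same route as the paper: the paper deduces the proposition in one line from \eqref{1.3} (Theorem A) together with \cite[Theorem 4]{K-3} applied to $b(z)-\beta(z)=e^{i\phi}(B(\phi,t)-B_{\mathrm{as}}(\phi,t))$, which is exactly what you do, merely filling in the (correct) inverse-function detail that converts $\mathfrak{p}(z+h)-\mathfrak{p}(z)\ll z^{-\delta}$ into $h\ll z^{-\delta}$ using that $|\mathfrak{p}_z|\gg 1$ and $\mathfrak{p}$ is bounded on the excised domain.
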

Recall that $P(\lambda)=4\lambda^3+2e^{i\phi}\lambda +A_{\phi}.$
Let us insert $v={\mathfrak{p}_{(h)}}(z):=\mathfrak{p}(z+h(z))$ into the first equation of 
\eqref{3.1}. Observing that $v_z=(1+h')\mathfrak{p}'(z+h)=(1+h')
\sqrt{P({\mathfrak{p}_{(h)}})},$ and that $(1+h')\sqrt{P({\mathfrak{p}_{(h)}})}+(2/5)z^{-1}
{\mathfrak{p}_{(h)}}=\sqrt{P({\mathfrak{p}_{(h)}})} (1+z^{-1}b P({\mathfrak{p}_{(h)}})^{-1})^{1/2},$
we have, in $\check{\mathcal{D}}(\phi,t_{\infty},c,\varepsilon)$,
$$
h'=\Bigl( \frac{b}{2P({\mathfrak{p}_{(h)}})}
-\frac{2{\mathfrak{p}_{(h)}}}{5\sqrt{P({\mathfrak{p}_{(h)}})}}
\Bigr)z^{-1} - \frac{b^2}{8P({\mathfrak{p}_{(h)}})^2} z^{-2} +O(z^{-3}).
$$
Furthermore, by $\mathfrak{p}_{(h)}(z)=\mathfrak{p}(z)+h \mathfrak{p}'(z)+O(h^2),$
\begin{equation}\label{3.3}
h'=F(\mathfrak{p},b)z^{-1}-G(\mathfrak{p},b)z^{-2} +F_v(\mathfrak{p},b)\mathfrak{p}'hz^{-1} +O(z^{-1}(|z^{-1}|
+|h|)^2)
\end{equation}
with $F(v,b)=\tfrac 12 bP(v)^{-1}-\tfrac 25 vP(v)^{-1/2},$ $G(v,b)=\tfrac 18
b^2P(v)^{-2}$ as in Theorem \ref{thm2.2}.
Write $\chi:=b-\beta.$ Then the second equations of \eqref{3.1} and \eqref{3.2}
yield
\begin{align*}
\chi'=& -\frac 85 e^{i\phi}(\mathfrak{p}_{(h)}(z)-\mathfrak{p}(z)) -\frac{b(z)}5z^{-1}
\\
=& -\frac 85 e^{i\phi} \Bigl(\mathfrak{p}' h+\frac{\mathfrak{p}''}{2!}h^2 +\cdots + \frac
{\mathfrak{p}^{(m)}}{m!} h^m +E_m h^{m+1} \Bigr) -\frac{b}5z^{-1}
\end{align*}
for any positive integer $m$, where $E_m \ll 1$ in $\check{\mathcal{D}}(\phi,
t_{\infty},c,\varepsilon).$
Let $\{z_n\} \subset \check{\mathcal{D}}(\phi,t_{\infty},c,\varepsilon)$ 
be any sequence
such that $z_n \to\infty$. Then integration by parts leads to
\begin{align*}
\chi(z)-\chi(z_n) =& -\frac 85 e^{i\phi}\Bigl[\mathfrak{p} h +\frac{\mathfrak{p}'}2 h^2+
\cdots + \frac{\mathfrak{p}^{(m-1)}}{m!}h^m  \Bigr]^z_{z_n}
\\
 +\frac 85 e^{i\phi}  \int^z_{z_n} &
\Bigl(\Bigl(\mathfrak{p} +{\mathfrak{p}'} h+
\cdots + \frac{\mathfrak{p}^{(m-1)}}{(m-1)!}h^{m-1} \Bigr)h'-E_mh^{m+1} \Bigr)d\zeta
 -\frac 15 \int^z_{z_n}(\beta+\chi)\frac{d\zeta}{\zeta}.
\end{align*}
By \eqref{3.3}, $h'=(F(\mathfrak{p},\beta) +\tfrac 12 \chi P(\mathfrak{p})^{-1}+\tilde{R}
 )z^{-1}$ with $\tilde{R} \ll |z^{-1}|+|h|,$ and hence the sum of the
integrals on the right-hand side is
$$
 \frac 15 \int^z_{z_n}  (8e^{i\phi} F(\mathfrak{p},\beta)\mathfrak{p}-\beta)\frac{d\zeta}{\zeta}
+\frac 15 \int^z_{z_n}R(h,\chi) \frac{d\zeta}{\zeta} -\frac 85 e^{i\phi}
\int^z_{z_n}E_m h^{m+1} d\zeta,
$$
with
\begin{align}
\notag
R(h,\chi):=& 8e^{i\phi}\Bigl(\mathfrak{p}'+\cdots+\frac{\mathfrak{p}^{(m-1)}}{(m-1)!}h^{m-2}
\Bigr) \Bigl(F(\mathfrak{p},\beta)+\frac{\chi}{2P(\mathfrak{p})}+\tilde{R} \Bigr)h
\\
\label{3.4}
& +8e^{i\phi}\mathfrak{p}\Bigl(\frac {\chi}{2P(\mathfrak{p})}+\tilde{R} \Bigr)-\chi
\ll |h|+|\chi|+|\zeta^{-1}|.
\end{align}
Now suppose that, for a positive number $\mu$ satisfying $\delta\le \mu \le 1,$
\begin{equation}\label{3.5}
h(z) \ll z^{-\mu}
\end{equation}
in $\check{\mathcal{D}}_{\mathrm{cut}}(\phi,t_{\infty},c,\varepsilon).$ 
By Proposition \ref{prop3.2} this supposition is true for $\mu=\delta,$ and 
$\chi(z)\ll z^{-\delta}.$ Choose $m$ such that $\delta (m+1) \ge 3$. 
The passage to the limit $z_n\to \infty$ leads to
\begin{equation}\label{3.6}
\chi+\frac 85 e^{i\phi}\mathfrak{p} h =\frac 15 \int^z_{\infty} (8e^{i\phi}F(\mathfrak{p},\beta)
\mathfrak{p}-\beta)\frac{d\zeta}{\zeta}+\frac 15 \int^z_{\infty}R(h,\chi)\frac{d\zeta}
{\zeta} +O(z^{-2\mu}),
\end{equation}
in which the convergence of
$$
\int^z_{\infty} (8e^{i\phi}F(\mathfrak{p},\beta)\mathfrak{p}-\beta)\frac{d\zeta}{\zeta} \ll
z^{-\delta}
$$
is guaranteed by the absolute convergence of $\int^z_{\infty} R(h,\chi)
 \zeta^{-1} d\zeta \ll z^{-\delta}$ (cf. \eqref{3.4} and Proposition 
\ref{prop3.2}).
Under \eqref{3.5}, observing that, in \eqref{3.3},
$$
-G(\mathfrak{p},b)z^{-2} +F_v(\mathfrak{p},b)\mathfrak{p}'hz^{-1}=-G(\mathfrak{p},\beta)z^{-2}+F_v(\mathfrak{p},\beta)
\mathfrak{p}'hz^{-1}+O(z^{-1-\mu-\delta}),
$$
and that
\begin{align*}
\int^z_{z_n} F_v(\mathfrak{p},\beta)\mathfrak{p}' h\frac{d\zeta}{\zeta} =&
\int^z_{z_n} \Bigl(F(\mathfrak{p},\beta)'-\frac{\beta'}{2P(\mathfrak{p})} \Bigr)h\frac{d\zeta}
{\zeta} 
\\
=&\Bigl[ F(\mathfrak{p},\beta)h \zeta^{-1} \Bigr]^z_{z_n} -\int^z_{z_n} 
 \Bigl(F(\mathfrak{p},\beta)h'+\frac{\beta'h}{2P(\mathfrak{p})} \Bigr)\frac{d\zeta}{\zeta}
+\Bigl[ O(z^{-1-\mu})\Bigr]^z_{z_n}
\\
=& -\int^z_{z_n} 
 \Bigl(F(\mathfrak{p},\beta)^2\zeta^{-1}+\frac{\beta'h}{2P(\mathfrak{p})} \Bigr)\frac{d\zeta}
{\zeta}+ \Bigl[O(z^{-1-\delta})\Bigr]^z_{z_n},
\end{align*}
we may apply a similar argument to \eqref{3.3} with $F(\mathfrak{p},b)=F(
\mathfrak{p},\beta) +\tfrac 12 \chi P(\mathfrak{p})^{-1}$, and 
the convergence of 
$\int^z_{\infty} F(\mathfrak{p},\beta)\zeta^{-1}d\zeta$ follows. Thus we have
the following relations, in which the second equation follows from \eqref{3.6}
and \eqref{3.2}.
\begin{prop}\label{prop3.3}
Under the supposition \eqref{3.5}, in $\check{\mathcal{D}}_{\mathrm{cut}}
(\phi,t_{\infty},c,\varepsilon)$,
\begin{align*}
h=& \int^z_{\infty} F(\mathfrak{p},\beta)\frac{d\zeta}{\zeta} -\int^z_{\infty}
(F(\mathfrak{p},\beta)^2+G(\mathfrak{p},\beta) )\frac{d\zeta}{\zeta^2} +\int^z_{\infty}
\frac{\chi-\beta'h}{2P(\mathfrak{p})} \frac{d\zeta}{\zeta} +O(z^{-\mu-\delta}), 
\\
\chi- &\beta'h= \frac 65A_{\phi}h +\frac 15 \int^z_{\infty} H(\mathfrak{p},\beta)\frac
{d\zeta}{\zeta} +\frac 15 \int^z_{\infty} R(h,\chi)\frac {d\zeta}{\zeta}
+O(z^{-2\mu}),
\end{align*}
in which $H(\mathfrak{p},\beta)=8e^{i\phi} F(\mathfrak{p},\beta)\mathfrak{p}-\beta,$ every integral
converges, and
\begin{align*}
&\int^z_{\infty} F(\mathfrak{p},\beta)\frac{d\zeta}{\zeta} \ll z^{-\delta}, \quad
\int^z_{\infty}( F(\mathfrak{p},\beta)^2 +G(\mathfrak{p},\beta)) \frac{d\zeta}{\zeta^2} 
\ll z^{-1}, \quad \int^z_{\infty}\frac{\chi-\beta'h}{2P(\mathfrak{p})}\frac{d\zeta}
{\zeta}\ll z^{-\delta},
\\
&\int^z_{\infty} H(\mathfrak{p},\beta)\frac{d\zeta}{\zeta} \ll z^{-\delta}, \quad
\int^z_{\infty} R(h,\chi) \frac{d\zeta}{\zeta} \ll z^{-\delta}. 
\end{align*}
\end{prop}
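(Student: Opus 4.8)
The plan is to obtain both relations directly from the two equations \eqref{3.3} and \eqref{3.6} already assembled above: the second by an algebraic rearrangement, the first by integrating \eqref{3.3} from $z_n$ to $z$ and passing to the limit $z_n\to\infty$. The only genuinely analytic point is the convergence of the tail integrals whose integrands do not decay, and I would extract this from the a priori bound \eqref{3.5} rather than from any absolute estimate.

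I would treat the second relation first, since it is purely algebraic. Subtracting $\beta'h$ from both sides of \eqref{3.6} and substituting $\beta'=-\tfrac{8}{5}e^{i\phi}\mathfrak{p}-\tfrac{6}{5}A_\phi$ from the second equation of \eqref{3.2}, the two multiples of $e^{i\phi}\mathfrak{p}h$ cancel and leave $-\tfrac{8}{5}e^{i\phi}\mathfrak{p}h-\beta'h=\tfrac{6}{5}A_\phi h$, which is exactly the stated shape with $H(\mathfrak{p},\beta)=8e^{i\phi}F(\mathfrak{p},\beta)\mathfrak{p}-\beta$. For the accompanying bounds I would invoke \eqref{3.4}, namely $R(h,\chi)\ll|h|+|\chi|+|\zeta^{-1}|$, together with Proposition \ref{prop3.2}, to get $R\ll\zeta^{-\delta}$; then $\int^z_\infty R(h,\chi)\zeta^{-1}d\zeta$ converges absolutely with tail $\ll z^{-\delta}$, and \eqref{3.6} in turn forces $\tfrac{1}{5}\int^z_\infty H(\mathfrak{p},\beta)\zeta^{-1}d\zeta\ll z^{-\delta}$, its left-hand side $\chi+\tfrac{8}{5}e^{i\phi}\mathfrak{p}h$ being $O(z^{-\delta})$.

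For the first relation I would integrate \eqref{3.3}. Exploiting that $F(v,b)$ is linear in $b$, I write $F(\mathfrak{p},b)=F(\mathfrak{p},\beta)+\tfrac{1}{2}\chi P(\mathfrak{p})^{-1}$, and replace $G(\mathfrak{p},b)$ and $F_v(\mathfrak{p},b)$ by their $\beta$-versions at the cost of $O(z^{-2-\delta})$ and $O(z^{-\delta})$, as recorded before the statement. Integrating $h'$ and letting $z_n\to\infty$, the contribution of $F_v(\mathfrak{p},\beta)\mathfrak{p}'h\,\zeta^{-1}$ is processed by the integration by parts displayed just above the proposition, which—after inserting $h'=F(\mathfrak{p},\beta)\zeta^{-1}+O(\zeta^{-1-\delta})$—turns it into $-\int^z_\infty F(\mathfrak{p},\beta)^2\zeta^{-2}d\zeta-\int^z_\infty\tfrac{\beta'h}{2P(\mathfrak{p})}\zeta^{-1}d\zeta+O(z^{-1-\delta})$. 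Collecting terms, the $\tfrac{1}{2}\chi P(\mathfrak{p})^{-1}$ piece and the $-\tfrac{\beta'h}{2P(\mathfrak{p})}$ piece merge into $\int^z_\infty\tfrac{\chi-\beta'h}{2P(\mathfrak{p})}\zeta^{-1}d\zeta$, while $-G$ and $-F^2$ merge into $-\int^z_\infty(F(\mathfrak{p},\beta)^2+G(\mathfrak{p},\beta))\zeta^{-2}d\zeta$; the several error terms all sit inside $O(z^{-\mu-\delta})$ since $\delta\le\mu<1$, giving the asserted identity.

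The step I expect to be the real obstacle is the convergence of $\int^z_\infty F(\mathfrak{p},\beta)\zeta^{-1}d\zeta$ (and likewise of $\int^z_\infty H(\mathfrak{p},\beta)\zeta^{-1}d\zeta$): here $F(\mathfrak{p},\beta)$ is bounded but non-decaying along the strip, so a direct absolute estimate would only produce a divergent logarithm. I would circumvent this by never estimating $\int F\zeta^{-1}d\zeta$ on its own. Since $h\ll z^{-\mu}\to0$, one has $h(z)=\int^z_\infty h'\,d\zeta$, and with $h'=(F(\mathfrak{p},\beta)+\tfrac{1}{2}\chi P(\mathfrak{p})^{-1}+\tilde R)\zeta^{-1}$, $\tilde R\ll|\zeta^{-1}|+|h|\ll\zeta^{-\delta}$, the convergence and the bound $\int^z_\infty F(\mathfrak{p},\beta)\zeta^{-1}d\zeta\ll z^{-\delta}$ follow by subtracting the absolutely convergent remainder integrals from the already-controlled quantity $h(z)$. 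The remaining integrals are then routine: $\int^z_\infty(F^2+G)\zeta^{-2}d\zeta\ll z^{-1}$ because $F^2+G$ stays bounded in $\check{\mathcal{D}}_{\mathrm{cut}}(\phi,c,\varepsilon)$, where $P(\mathfrak{p})$ is bounded away from its zeros and $\mathfrak{p}$ away from its poles, and $\int^z_\infty\tfrac{\chi-\beta'h}{2P(\mathfrak{p})}\zeta^{-1}d\zeta\ll z^{-\delta}$ since its integrand is $\ll\zeta^{-1-\delta}$.
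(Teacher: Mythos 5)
Your proposal is correct and takes essentially the same route as the paper: the second relation by rewriting \eqref{3.6} with $\beta'=-\tfrac85 e^{i\phi}\mathfrak{p}-\tfrac65 A_{\phi}$ from \eqref{3.2}, the first by integrating \eqref{3.3} with $F(\mathfrak{p},b)=F(\mathfrak{p},\beta)+\tfrac12\chi P(\mathfrak{p})^{-1}$ and the displayed integration by parts on the $F_v(\mathfrak{p},\beta)\mathfrak{p}'h$ term, and with all error terms absorbed into $O(z^{-\mu-\delta})$. In particular, your key device for the non--absolutely-convergent integrals $\int^z_{\infty}F(\mathfrak{p},\beta)\zeta^{-1}d\zeta$ and $\int^z_{\infty}H(\mathfrak{p},\beta)\zeta^{-1}d\zeta$ --- deducing their convergence and $O(z^{-\delta})$ bounds from the vanishing of $h(z_n)$, Proposition \ref{prop3.2}, and the absolute convergence of the remaining pieces, rather than estimating them directly --- is exactly the paper's mechanism (``guaranteed by the absolute convergence of $\int^z_{\infty}R(h,\chi)\zeta^{-1}d\zeta$'' and the ``similar argument'' applied to \eqref{3.3}).
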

\section{Proofs of the main results}\label{sc4}
By Proposition \ref{prop3.3},
$$
h-\frac 35A_{\phi}\int^z_{\infty} \frac h{P(\mathfrak{p})}\frac{d\zeta}{\zeta}
=\int^z_{\infty}F(\mathfrak{p},\beta)\frac{d\zeta}{\zeta} -\int^z_{\infty}(F(\mathfrak{p},\beta)
^2+G(\mathfrak{p},\beta))\frac{d\zeta}{\zeta^2} +I_1+O(z^{-\mu-\delta})
$$
with
$$
I_1=\frac 1{10}\int^z_{\infty}\frac 1{P(\mathfrak{p})} \int^{\zeta}_{\infty}(H(\mathfrak{p},
\beta)+R(h,\chi) )\frac{d\zeta_1}{\zeta_1} \frac{d\zeta}{\zeta}
$$
in $\check{\mathcal{D}}_{\mathrm{cut}}(\phi,t_{\infty},c,\varepsilon)$ 
since $\mu\ge\delta.$ Note that 
$\int^z_{\infty} P(\mathfrak{p})^{-1}\zeta^{-1}d\zeta \ll z^{-1}$ by
Lemma \ref{lem5.5} shown later, 
and $\int^z_{\infty}H(\mathfrak{p},\beta)\zeta^{-1}d\zeta \ll
z^{-\delta}$, $\int^z_{\infty} R(h,\chi) \zeta^{-1}d\zeta \ll z^{-\delta}$ by
Proposition \ref{prop3.3} and \eqref{3.4}. Then integration by parts leads to
\begin{align}\notag
10 I_1 &= \int^z_{\infty}\frac 1{P(\mathfrak{p})} \frac{d\zeta}{\zeta} \int^z_{\infty}
(H(\mathfrak{p},\beta)+R(h,\chi))\frac{d\zeta}{\zeta} 
\\
\notag
&\phantom{---} -\int^z_{\infty}\int^{\zeta}
_{\infty}\frac 1{P(\mathfrak{p})} \frac{d\zeta_1}{\zeta_1}(H(\mathfrak{p},\beta)+R(h,\chi))
\frac{d\zeta}{\zeta}
\\
\label{4.a}
&= -\int^z_{\infty} H(\mathfrak{p},\beta)\int^{\zeta}_{\infty}\frac 1{P(\mathfrak{p})}\frac
{d\zeta_1}{\zeta_1} \frac{d\zeta}{\zeta}+O(z^{-1-\delta}).
\end{align}
Furthermore, by \eqref{3.3}, \eqref{3.5}, Lemma \ref{lem5.5} and Proposition
\ref{prop3.2},
\begin{align}\notag
\int^z_{\infty}\frac{h}{P(\mathfrak{p})} \frac{d\zeta}{\zeta} &= h\int^z_{\infty}
\frac 1{P(\mathfrak{p})}\frac{\d\zeta}{\zeta}-\int^z_{\infty} h'\int^{\zeta}_{\infty}
\frac 1{P(\mathfrak{p})} \frac{d\zeta_1}{\zeta_1} d\zeta
\\
\notag
&= -\int^z_{\infty}\Bigl(F(\mathfrak{p},\beta)+\frac{\chi}{2P(\mathfrak{p})}\Bigr) \int^{\zeta}
_{\infty} \frac 1{P(\mathfrak{p})}\frac{d\zeta_1}{\zeta_1}\frac{d\zeta}{\zeta}
+O(z^{-1-\mu})
\\
\label{4.b}
&= -\int^z_{\infty}F(\mathfrak{p},\beta) \int^{\zeta}_{\infty}\frac 1{P(\mathfrak{p})} 
\frac{d\zeta_1}{\zeta_1}\frac{d\zeta}{\zeta} +O(z^{-1-\delta}). 
\end{align}
Using \eqref{4.a} and \eqref{4.b} we obtain, under supposition \eqref{3.5},
\begin{align}\notag
h=\int^z_{\infty}& F(\mathfrak{p},\beta)\frac{d\zeta}{\zeta} -\int^z_{\infty}(F(\mathfrak{p},\beta)
^2+G(\mathfrak{p},\beta)) \frac{d\zeta}{\zeta^2}
\\
\label{4.1}
& -\frac 1{10}\int^z_{\infty}K(\mathfrak{p},\beta)
\mathcal{I}(\zeta)\frac{d\zeta}{\zeta} +O(z^{-\mu-\delta})
\end{align}
in $\check{\mathcal{D}}_{\mathrm{cut}}(\phi,t_{\infty},c,\varepsilon),$ 
in which the
implied constant possibly depends on $\mu$, and $K(\mathfrak{p},\beta)=H(\mathfrak{p},\beta)
+6A_{\phi}F(\mathfrak{p},\beta)=2(4e^{i\phi}\mathfrak{p}+3A_{\phi})F(\mathfrak{p},\beta)-\beta,$
$\mathcal{I}(z)=\int^z_{\infty} P(\mathfrak{p})^{-1}\zeta^{-1}d\zeta.$  
The integrals on the right-hand side of \eqref{4.1} satisfy
\begin{equation}\label{4.2}
\begin{split}
&\int^z_{\infty}F(\mathfrak{p},\beta)\frac{d\zeta}{\zeta} \ll z^{-1}, \quad
\int^z_{\infty}(F(\mathfrak{p},\beta)^2 +G(\mathfrak{p},\beta))\frac{d\zeta}{\zeta^2}
 \ll z^{-1}, 
\\
&\int^z_{\infty}K(\mathfrak{p},\beta)\mathcal{I}(\zeta)\frac{d\zeta}{\zeta}
 \ll z^{-1}, \quad \mathcal{I}(z)\ll z^{-1},
\end{split}
\end{equation}
in which the first nontrivial estimate follows from Lemmas \ref{lem5.5} 
and \ref{lem5.4} with 
$\mathfrak{p}/\sqrt{P(\mathfrak{p})}=\mathfrak{p}\mathfrak{p}'/P(\mathfrak{p})$ 
and $\beta(z)=\tfrac 85 e^{i\phi}\omega_{\mathbf{a}}^{-1} (\beta_0 +
\mathfrak{b}(z-t_0))$ (cf. \eqref{5.6}).
\subsection{Derivation of Theorem \ref{thm2.2} and Corollary \ref{cor2.3}}
\label{ssc4.1}
To prove Theorem \ref{thm2.2} let us start with \eqref{1.3} and \eqref{1.5}
of Theorems A and B with given $c>0,$ $\varepsilon >0$ and a small $\delta>0.$
By \eqref{4.2} the sum of the integrals on the right-hand side of \eqref{4.1}
is $O(z^{-1})$ in $\check{\mathcal{D}}_{\mathrm{cut}}(\phi,t_{\infty},c,
\varepsilon)$ for sufficiently large $t_{\infty}$.
Note that \eqref{3.5} is valid for $\mu=\delta,$ i.e. $h(z)\ll z^{-\delta}.$
Then, by \eqref{4.1} with $\mu=\delta,$ we have $h(z)\ll |z^{-1}|
+|z^{-2\delta}|.$ If $2\delta <1,$ then \eqref{3.5} is valid for $\mu=2\delta,$
which implies asymptotic formula \eqref{4.1} with the error bound 
$O(z^{-3\delta})$ and $h(z) \ll |z^{-1}|+|z^{-3\delta}|.$ 
For $m_0$ such that $m_0\delta <1 \le (m_0+1)\delta,$ $m_0$-times
repetition of this procedure results in
\eqref{4.1} with $\mu=m_0\delta$, which yields \eqref{3.5} with $\mu=1.$  
Then \eqref{4.1} with $\mu=1$ is valid in 
$\check{\mathcal{D}}_{\mathrm{cut}}(\phi,t_{\infty},c,\varepsilon)$, implying 
Theorem \ref{thm2.2}. Calculation of
the coefficient of $\beta_0^2$ by the use of Lemma \ref{lem5.5} leads us to
Corollary \ref{cor2.3}.
\subsection{Derivation of Theorem \ref{thm2.1}}\label{ssc4.2}
Theorem \ref{thm2.2} implies $h(z)\ll z^{-1}$ in $\check{\mathcal{D}}_{\mathrm
{cut}}(\phi,t_{\infty},c,\varepsilon)$. Note that
\begin{align*}
& (e^{-i\phi}x)^{-1/2}y(x)-\wp(e^{i\phi}t-t_0; g_2(\phi),g_3(\phi))
\\
=& \wp(e^{i\phi}t-t_0 +h(e^{i\phi}t); g_2(\phi),g_3(\phi))
-\wp(e^{i\phi}t-t_0; g_2(\phi),g_3(\phi))
\\
\ll & \wp'(e^{i\phi}t-t_0; g_2(\phi),g_3(\phi)) h(e^{i\phi}t) \ll t^{-1}
\end{align*}
in $\check{\mathcal{D}}_{\mathrm{cut}}(\phi,t_{\infty},c,\varepsilon)$, 
and that the function on the first line is holomorphic in  
${\mathcal{D}}(\phi,t_{\infty},c,\varepsilon)$. By using the maximal 
modulus principle
in excluded discs around $t_j^{m,n}$ $(1\le j \le 3;\, m,n\in\mathbb{Z}),$ 
we obtain Theorem \ref{thm2.1}.
\section{Lemmas on primitive functions}\label{sc5}
It remains to show the lemmas on primitive functions used in the proofs of
the main results. Recall that $P(\lambda)=4w(A_{\phi},\lambda)^2$ as in
Theorem \ref{thm2.2}, and let us write
\begin{align*}
P(\lambda)&=4\lambda^3-g_2\lambda -g_3 =4(\lambda-\lambda_1)(\lambda-\lambda_2)
(\lambda-\lambda_3), \quad g_2=-2e^{i\phi}, \quad g_3=-A_{\phi},
\\
&\lambda_1=\wp(\omega_1),\quad  \lambda_2=\wp(\omega_2),\quad 
\lambda_3=\wp(\omega_3), 
\end{align*}
where 
$$
\omega_1=\frac 12\omega_{\mathbf{a}},\quad \omega_3=\frac 12
\omega_{\mathbf{b}}, \quad \omega_1+\omega_2+\omega_3=0.
$$
Then
\begin{align*}
&\lambda_1+\lambda_2+\lambda_3=0, \quad \lambda_2\lambda_3+\lambda_3\lambda_1
+\lambda_1\lambda_2=-g_2/4, \quad \lambda_1\lambda_2\lambda_3=g_3/4,
\\
& (\lambda_2-\lambda_3)^2(\lambda_3-\lambda_1)^2(\lambda_1-\lambda_2)^2
=\frac 1{16}(g_2^3-27g_3^2),
\end{align*}
and $\omega_{\mathbf{a}}$ and $J_{\mathbf{a}}$ are also written in the form
$$
\omega_{\mathbf{a}}=\int_{\mathbf{a}}
 \frac{d\lambda}{\sqrt{P(\lambda)}}, \quad
J_{\mathbf{a}}=\int_{\mathbf{a}}
 \sqrt{P(\lambda)} d\lambda.
$$
\begin{lem}\label{lem5.1}
Let 
\begin{align*}
&\gamma_1=(\lambda_1-\lambda_2)(\lambda_1-\lambda_3), \quad
\gamma_2=(\lambda_2-\lambda_3)(\lambda_2-\lambda_1), \quad
\gamma_3=(\lambda_3-\lambda_1)(\lambda_3-\lambda_2), 
\\
&\gamma_0=5(g_2^3-27g_3^2)^{-1}
\end{align*}
and $\wp(z)=\wp(z;g_2, g_3).$ Then
\begin{align*}
\int^z_0  \frac{dz}{P(\wp(z))} =& -\frac 1{4\omega_{\mathbf{a}}} \Bigl(
\gamma_1^{-2}\Bigl(\frac{5J_{\mathbf{a}}}{2g_2}z+\frac{\vartheta'}{\vartheta}
\Bigl(\frac z{\omega_{\mathbf{a}}} +\frac{\tau}2,\tau\Bigr)\Bigr)
+\gamma_2^{-2}\Bigl(\frac{5J_{\mathbf{a}}}{2g_2}z+\frac{\vartheta'}{\vartheta}
\Bigl(\frac z{\omega_{\mathbf{a}}} ,\tau\Bigr)\Bigr)
\\
&+\gamma_3^{-2}\Bigl(\frac{5J_{\mathbf{a}}}{2g_2}z+\frac{\vartheta'}{\vartheta}
\Bigl(\frac z{\omega_{\mathbf{a}}} +\frac{1}2,\tau\Bigr)\Bigr) \Bigr) +C_0,
\\
\int^z_0  \frac{dz}{P(\wp(z))^2} =& \gamma_0z 
-\frac 1{96\omega_{\mathbf{a}}} \Bigl(
\gamma_1^{-4} (\partial_z^2 -48\lambda_1)
\Bigl(\frac{5J_{\mathbf{a}}}{2g_2}z+\frac{\vartheta'}{\vartheta}
\Bigl(\frac z{\omega_{\mathbf{a}}} +\frac{\tau}2,\tau\Bigr)\Bigr)
\\
&+\gamma_2^{-4} (\partial_z^2 -48\lambda_2)
\Bigl(\frac{5J_{\mathbf{a}}}{2g_2}z+\frac{\vartheta'}{\vartheta}
\Bigl(\frac z{\omega_{\mathbf{a}}} ,\tau\Bigr)\Bigr)
\\
&+\gamma_3^{-4} (\partial_z^2 -48\lambda_3)
\Bigl(\frac{5J_{\mathbf{a}}}{2g_2}z+\frac{\vartheta'}{\vartheta}
\Bigl(\frac z{\omega_{\mathbf{a}}} +\frac{1}2,\tau\Bigr)\Bigr) \Bigr) +C_1,
\end{align*}
where $C_0$ and $C_1$ are some constants and $\partial_z=d/dz$.
\end{lem}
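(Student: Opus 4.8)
The plan is to reduce both primitives to integrals of Weierstrass functions shifted by the half-periods $\omega_j$, and then to rewrite those through the logarithmic derivative of $\vartheta$. The starting point is the partial-fraction decomposition $P(\wp)^{-1}=\tfrac14\sum_{j=1}^{3}\gamma_j^{-1}(\wp-\lambda_j)^{-1}$ (and its square), together with the half-period identity $\frac{1}{\wp(z)-\lambda_j}=\gamma_j^{-1}\bigl(\wp(z-\omega_j)-\lambda_j\bigr)$, which holds because both sides are even elliptic functions with the same double pole at $z\equiv\omega_j$: the relation $\wp''(\omega_j)=2\gamma_j$ fixes the leading coefficient, and letting $z\to0$ (where both sides vanish) fixes the additive constant. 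Integrating term by term via $\int\wp(u)\,du=-\zeta(u)$ then produces $\zeta(z-\omega_j)$, and I would convert each such term using $\zeta(z)=\frac{\eta_1}{\omega_1}z+\frac{1}{2\omega_1}\frac{\vartheta_1'}{\vartheta_1}\bigl(\frac{z}{2\omega_1}\bigr)$, where $\vartheta_1$ is the odd theta-function, $\eta_1=\zeta(\omega_1)$, and $2\omega_1=\omega_{\mathbf{a}}$.

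For the first integral this gives $\int_0^z P(\wp)^{-1}\,dz=\tfrac14\sum_j\gamma_j^{-2}\bigl[-\zeta(z-\omega_j)-\lambda_j z\bigr]+C_0$. The shift $\frac{z-\omega_j}{2\omega_1}=\frac{z}{\omega_{\mathbf{a}}}-\frac{\omega_j}{\omega_{\mathbf{a}}}$ turns $\frac{\vartheta_1'}{\vartheta_1}$ into $\frac{\vartheta'}{\vartheta}\bigl(\frac{z}{\omega_{\mathbf{a}}}+\mathrm{shift}_j,\tau\bigr)$ with $\mathrm{shift}_1=\tfrac\tau2$, $\mathrm{shift}_2=0$, $\mathrm{shift}_3=\tfrac12$, the passage between $\vartheta_1$ and $\vartheta(\cdot,\tau)$ contributing only additive constants (absorbed into $C_0$); this reproduces the three arguments in the statement and the prefactor $-\tfrac{1}{4\omega_{\mathbf{a}}}$. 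The remaining point is the linear coefficient: the $z$-coefficient produced this way is $\tfrac{\eta_1}{\omega_1}+\lambda_j$ per term, and to display it uniformly as $\tfrac{5J_{\mathbf{a}}}{2g_2}$ I would use the period relation $J_{\mathbf{a}}=\oint(\wp')^2\,du=\tfrac15(4g_2\eta_1-6g_3\omega_1)$ (from integrating $\frac{d}{du}(\wp\wp')=10\wp^3-\tfrac32 g_2\wp-g_3$ over the $\mathbf{a}$-cycle) together with the root identity $\sum_j\lambda_j\gamma_j^{-2}=-\tfrac{3g_3}{2g_2}\sum_j\gamma_j^{-2}$, which follows from $2a\lambda_j+3b=\lambda_j\gamma_j$ (with $a=g_2/4$, $b=g_3/4$) and $\sum_j\lambda_j/\gamma_j=0$.

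For the second integral the partial fraction has double and simple poles, $P(\wp)^{-2}=\tfrac1{16}\sum_j\bigl[\gamma_j^{-2}(\wp-\lambda_j)^{-2}+B_j(\wp-\lambda_j)^{-1}\bigr]$ with $B_j=-2\gamma_j^{-2}\sum_{k\ne j}(\lambda_j-\lambda_k)^{-1}=-6\lambda_j\gamma_j^{-3}$ (using $\sum_j\lambda_j=0$). Squaring the half-period identity and using $\int\wp(u)^2\,du=\tfrac16\wp'(u)+\tfrac{g_2}{12}u$ produces $\wp'(z-\omega_j)$ and $\zeta(z-\omega_j)$ terms; since $\wp'(z-\omega_j)=-\zeta''(z-\omega_j)=-\omega_{\mathbf{a}}^{-1}\partial_z^2\frac{\vartheta'}{\vartheta}(\cdots)$ while $\zeta(z-\omega_j)$ contributes $\frac{\vartheta'}{\vartheta}(\cdots)$ directly, the two assemble into the single operator $(\partial_z^2-48\lambda_j)$, the coefficient $48\lambda_j$ coming from combining the $2\lambda_j\zeta$ part of the double pole with the simple-pole residue $B_j=-6\lambda_j\gamma_j^{-3}$. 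The leading $\gamma_0 z$ collects every linear-in-$z$ contribution: the terms proportional to $\eta_1$ cancel by $\sum_j\lambda_j\gamma_j^{-4}=0$, and the surviving part $\tfrac1{16}\sum_j\gamma_j^{-4}\bigl(7\lambda_j^2+\tfrac{g_2}{12}\bigr)$ evaluates to $5(g_2^3-27g_3^2)^{-1}$ via $\gamma_1\gamma_2\gamma_3=-\tfrac1{16}(g_2^3-27g_3^2)$.

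The main obstacle is the bookkeeping in these reductions rather than any single hard idea: first, the half-period conversions between $\vartheta_1$ and the single function $\vartheta(\cdot,\tau)$ must be tracked so that the shifts $\tfrac\tau2,0,\tfrac12$ and all additive constants land correctly in $C_0$ and $C_1$; and second, the collapse of the many $\eta_1$- and $\lambda_j$-dependent linear terms into the clean coefficients $\tfrac{5J_{\mathbf{a}}}{2g_2}$ and $5(g_2^3-27g_3^2)^{-1}$ relies on the period relation for $J_{\mathbf{a}}$ together with the symmetric-function identities among the $\lambda_j$ recorded above. As an independent check I would differentiate the proposed right-hand sides and verify, using the quasi-periodicity of $\vartheta$ and $(\wp')^2=P(\wp)$, that the derivatives reduce to $P(\wp(z))^{-1}$ and $P(\wp(z))^{-2}$; this confirms the elliptic parts at once and leaves only the linear coefficients, which are then pinned down by evaluation at a single point.
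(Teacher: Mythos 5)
Your overall strategy is sound and in fact runs parallel to the paper's own proof: your partial-fraction-plus-half-period derivation of $P(\wp(z))^{-1}=\tfrac14\sum_j\gamma_j^{-2}\bigl(\wp(z-\omega_j)-\lambda_j\bigr)$ is an explicit form of the paper's Liouville-type identity \eqref{5.1}, your period relation $J_{\mathbf{a}}=\tfrac15(4g_2\eta_1-6g_3\omega_1)$ is equivalent to the paper's evaluation of the constant $c_0$ in \eqref{5.2} (the paper gets it by integrating $\sqrt{P(\lambda)}$ by parts over the $\mathbf{a}$-cycle), and your root identity $\sum_j\lambda_j\gamma_j^{-2}=-\tfrac{3g_3}{2g_2}\sum_j\gamma_j^{-2}$ is exactly the paper's relation $\Gamma_0=\tfrac{3g_3}{8g_2}\sum_j\gamma_j^{-2}$. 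The treatment of the first integral is correct, and your expansion of the second integrand, $16P(\wp(z))^{-2}=\sum_j\gamma_j^{-4}\bigl(\wp(z-\omega_j)^2-8\lambda_j\wp(z-\omega_j)+7\lambda_j^2\bigr)$, correctly reproduces the paper's identity \eqref{5.3} including the constant $\Gamma_1$.

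However, the matching of the linear-in-$z$ terms in the second formula rests on two identities that are false. First, $\sum_j\lambda_j\gamma_j^{-4}\neq 0$ in general: take $(\lambda_1,\lambda_2,\lambda_3)=(2,1,-3)$, so $g_2=28$, $g_3=-24$, $(\gamma_1,\gamma_2,\gamma_3)=(5,-4,20)$, and $\sum_j\lambda_j\gamma_j^{-4}=\tfrac{2}{625}+\tfrac{1}{256}-\tfrac{3}{160000}=\tfrac{567}{80000}\neq 0$. (By a weight count this sum is a nonzero constant multiple of $g_2g_3(g_2^3-27g_3^2)^{-2}$, and in the present setting $g_2g_3=2e^{i\phi}A_{\phi}\neq 0$, so the failure is not an artifact of a degenerate example.) Consequently the $\eta_1$-terms do \emph{not} cancel, and your second claim, that $\tfrac1{16}\sum_j\gamma_j^{-4}\bigl(7\lambda_j^2+\tfrac{g_2}{12}\bigr)=\gamma_0$, is false as well (in the same example the left side is about $0.0053$ while $\gamma_0=5/6400\approx 0.00078$). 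The correct bookkeeping is different: in the stated formula the operator $-48\lambda_j$ also acts on the linear term $\tfrac{5J_{\mathbf{a}}}{2g_2}z$, so the coefficient of $z$ on the right-hand side is $\gamma_0+\tfrac{5J_{\mathbf{a}}}{4g_2\omega_{\mathbf{a}}}\sum_j\lambda_j\gamma_j^{-4}$, which genuinely depends on $\eta_1$ through $J_{\mathbf{a}}$. Using your own period relation in the form $\tfrac{5J_{\mathbf{a}}}{4g_2\omega_{\mathbf{a}}}=\tfrac{\eta_1}{2\omega_1}-\tfrac{3g_3}{4g_2}$, the $\eta_1$-contributions on the two sides match identically (rather than cancel), and what remains to be proven is the symmetric-function identity
\begin{equation*}
\sum_{j=1}^{3}\gamma_j^{-4}\Bigl(\frac{g_2}{192}+\frac{7\lambda_j^2}{16}
+\frac{3g_3}{4g_2}\,\lambda_j\Bigr)=\frac{5}{g_2^3-27g_3^2}=\gamma_0,
\end{equation*}
which is true and replaces your two false identities. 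With this repair the rest of your argument (the residues $B_j=-6\lambda_j\gamma_j^{-3}$, the squared half-period identity, $\int\wp^2\,du=\tfrac16\wp'+\tfrac{g_2}{12}u$, and the assembly into $(\partial_z^2-48\lambda_j)$) goes through and yields the stated formula. Note also that your proposed final check (differentiate and evaluate at one point) would not catch this error automatically: after differentiation the disputed terms are constants, and verifying them is precisely the identity above.
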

\begin{proof}
Around $\omega_j$ $(j=1,2,3)$,
$$
\frac 1{P(\wp(z))}=\frac 14\gamma_j^{-2}(z-\omega_j)^{-2}(1+O(z-\omega_j)^2)
$$
since $\wp(z)=\lambda_j+\gamma_j(z-\omega_j)^2+O(z-\omega_j)^4.$ Hence
\begin{equation}\label{5.1}
\frac 1{P(\wp(z))}-\frac 14(\gamma_1^{-2}\wp(z-\omega_1)+
\gamma_2^{-2}\wp(z-\omega_2)+ \gamma_3^{-2}\wp(z-\omega_3) )\equiv \Gamma_0,
\end{equation}
in which, by putting $z=0$, we find $\Gamma_0= 9g_3 (g_2^3-27g_3^2)^{-1}.$
We may set, for some $c_0$,
\begin{equation}\label{5.2}
\wp(z)+\frac 1{\omega_{\mathbf{a}}}\partial_z \frac{\vartheta'}{\vartheta}
\Bigl(\frac z{\omega_{\mathbf{a}}}+\nu, \tau\Bigr)\equiv c_0,
\end{equation}
integration of which yields
$$
\omega_{\mathbf{a}} c_0 =c_0 \int^{\omega_1}_{-\omega_1}dz
= \int^{\omega_1}_{-\omega_1}\wp(z) dz =\int_{\mathbf{a}}\frac{\lambda}
{\sqrt{P(\lambda)}}d\lambda. 
$$
Observing that
\begin{align*}
J_{\mathbf{a}}= &\int_{\mathbf{a}}\sqrt{P(\lambda)}d\lambda
=\int_{\mathbf{a}}\Bigl(\frac 23\lambda\, \bigl(\!\sqrt{P(\lambda)}\bigr)' 
-\frac 23 
\frac{g_2\lambda}{\sqrt{P(\lambda)}} -\frac{g_3}{\sqrt{P(\lambda)}}\Bigr)d\lambda
\\
=& -\frac 23 J_{\mathbf{a}}-\frac 23 g_2 \int_{\mathbf{a}} \frac{\lambda}
{\sqrt{P(\lambda)}} d\lambda -g_3 \omega_{\mathbf{a}},
\end{align*}
we have $c_0=-5J_{\mathbf{a}}/(2g_2\omega_{\mathbf{a}})-3g_3/(2g_2).$
Inserting \eqref{5.2} with $z\mapsto z-\omega_j$ $(j=1,2,3)$ into \eqref{5.1}, 
and using
$$
\Gamma_0-\frac{3g_3}{8g_2}(\gamma_1^{-2}+\gamma_2^{-2}+\gamma_3^{-2})=0,
$$
we obtain the first primitive function.
To derive the second formula, we note that, around $z=\omega_j$ $(j=1,2,3)$,
\begin{align*}
\frac{16}{P(\wp(z))^2}=&
\Bigl(\gamma_j^{-2}\wp(z-\omega_j)-4\gamma_j^{-2}\lambda_j+O(z-\omega_j)^2
\Bigr)^2 
\\
=&\gamma_j^{-4}(\wp(z-\omega_j)^2-8\lambda_j\wp(z-\omega_j)+O(1))
\end{align*}
by \eqref{5.1}, since, say around $z=\omega_1$, 
$\wp(z-\omega_2)=\wp(\omega_3)+O(z-\omega_1)^2,$ 
$\wp(z-\omega_3)=\wp(\omega_2)+O(z-\omega_1)^2,$ with $\gamma_2^{-2}\lambda_3
+\gamma_3^{-2}\lambda_2+4\Gamma_0= -4\gamma_1^{-2}\lambda_1.$  
Then we set
\begin{align}\notag
\frac{16}{P(\wp(z))^2} &-\gamma_1^{-4} (\wp(z-\omega_1)^2-8\lambda_1
\wp(z-\omega_1) )  -\gamma_2^{-4} (\wp(z-\omega_2)^2-8\lambda_2\wp(z-\omega_2) )
\\
\label{5.3}
 &-\gamma_3^{-4} (\wp(z-\omega_3)^2-8\lambda_3\wp(z-\omega_3) )\equiv \Gamma_1
\end{align}
with
$$
\Gamma_1=\frac{7((\lambda_2-\lambda_3)^4\lambda_1^2 +
(\lambda_3-\lambda_1)^4\lambda_2^2+(\lambda_1-\lambda_2)^4\lambda_3^2)}
{(\lambda_2-\lambda_3)^{4}(\lambda_3-\lambda_1)^{4}
(\lambda_1-\lambda_2)^{4}}.
$$
Insertion of \eqref{5.2} and
$$
\wp(z)^2+ \frac {\partial_z^3}{6\omega_{\mathbf{a}}}  \frac{\vartheta'}
{\vartheta}\Bigl(\frac{z}{\omega_{\mathbf{a}}}+\nu, \tau \Bigr)\equiv
\frac{g_2 }{12}
$$
with $z \mapsto z-\omega_j$ $(j=1,2,3)$ into \eqref{5.3} leads to the second
formula.
\end{proof}
\begin{lem}\label{lem5.2}
Under the same supposition as in Lemma \ref{lem5.1}, for $k=0,1,2$,
\begin{align*}
\int^z_{z_0} \frac{\wp(z)^k\wp'(z)}{P(\wp(z))} dz =&\frac 14 \Bigl(
\gamma_1^{-1}\lambda_1^k \ln(\wp(z)-\lambda_1)
\\
& +\gamma_2^{-1}\lambda_2^k \ln(\wp(z)-\lambda_2)+
\gamma_3^{-1}\lambda_3^k \ln(\wp(z)-\lambda_3)\Bigr) +C(z_0),
\end{align*}
where $z_0\not\in \{0,\omega_1,\omega_2,\omega_3\}+\omega_{\mathbf{a}}\mathbb{Z}
+\omega_{\mathbf{b}}\mathbb{Z},$ and $C(z_0)$ is some constant.
\end{lem}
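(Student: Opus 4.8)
The plan is to reduce the integral to an elementary rational-function integral by the substitution $u=\wp(z)$. Since $\d u=\wp'(z)\,\d z$, the differential $\wp(z)^k\wp'(z)P(\wp(z))^{-1}\,\d z$ becomes $u^kP(u)^{-1}\,\d u$, so that
$$
\int^z_{z_0}\frac{\wp(z)^k\wp'(z)}{P(\wp(z))}\,\d z
=\int^{\wp(z)}\frac{u^k}{P(u)}\,\d u
$$
up to an additive constant depending on $z_0$. For $k=0,1,2$ the numerator $u^k$ has degree strictly below $\deg P=3$, so no polynomial part appears and the integral is purely logarithmic.

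First I would carry out the partial-fraction decomposition of $u^k/P(u)=u^k\bigl(4(u-\lambda_1)(u-\lambda_2)(u-\lambda_3)\bigr)^{-1}$. Because the roots $\lambda_1,\lambda_2,\lambda_3$ are distinct, the residue at $u=\lambda_j$ equals $\lambda_j^k/P'(\lambda_j)$, and since $P'(\lambda_j)=4\prod_{i\ne j}(\lambda_j-\lambda_i)=4\gamma_j$ with $\gamma_j$ as in Lemma \ref{lem5.1}, the decomposition reads
$$
\frac{u^k}{P(u)}=\frac14\sum_{j=1}^3\frac{\gamma_j^{-1}\lambda_j^k}{u-\lambda_j}.
$$
Integrating term by term and substituting back $u=\wp(z)$ yields exactly the asserted formula, with $C(z_0)$ absorbing the lower limit and the branch normalisation.

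Equivalently, and more cleanly, I would simply verify the formula by differentiation: using $(\d/\d z)\log(\wp(z)-\lambda_j)=\wp'(z)/(\wp(z)-\lambda_j)$, the derivative of the claimed right-hand side is $\tfrac14\wp'(z)\sum_j\gamma_j^{-1}\lambda_j^k(\wp(z)-\lambda_j)^{-1}$, and the same identity, now read as an identity of rational functions in $\lambda$, turns this into $\wp(z)^k\wp'(z)/P(\wp(z))$, the integrand. This reduces the whole lemma to the single algebraic identity $\tfrac14\sum_j\gamma_j^{-1}\lambda_j^k(\lambda-\lambda_j)^{-1}=\lambda^k/P(\lambda)$, which for each $k\in\{0,1,2\}$ follows by comparing residues at $\lambda=\lambda_j$ (or by clearing denominators and comparing two polynomials of degree $\le 2$ at three points).

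The only point requiring care, rather than a genuine obstacle, is the single-valuedness of the logarithms. The hypothesis $z_0\notin\{0,\omega_1,\omega_2,\omega_3\}+\omega_{\mathbf{a}}\mathbb{Z}+\omega_{\mathbf{b}}\mathbb{Z}$ guarantees that the starting point avoids both the poles of $\wp$ and the zeros of $\wp(z)-\lambda_j$ at the half-periods $\omega_j$, so each $\log(\wp(z)-\lambda_j)$ admits a holomorphic branch near $z_0$ and the antiderivative is determined up to $C(z_0)$ along any path in the cut domain. As a consistency check on the residues I would note that near a lattice point the right-hand side still reproduces the integrand: for $k=2$ the integrand has a simple pole of residue $-\tfrac12$ there, matched by the $-\tfrac12\log z$ arising from $\sum_j\gamma_j^{-1}\lambda_j^2=1$.
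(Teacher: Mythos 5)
Your proof is correct and takes essentially the same approach as the paper: the entire content of the paper's proof is the partial-fraction identity $4\wp^k/P(\wp)=\gamma_1^{-1}\lambda_1^k(\wp-\lambda_1)^{-1}+\gamma_2^{-1}\lambda_2^k(\wp-\lambda_2)^{-1}+\gamma_3^{-1}\lambda_3^k(\wp-\lambda_3)^{-1}$, which you derive via the residues $\lambda_j^k/P'(\lambda_j)$ with $P'(\lambda_j)=4\gamma_j$ and then integrate term by term. Your extra remarks (verification by differentiation, single-valuedness of the logarithms near $z_0$, and the residue check $\sum_j\gamma_j^{-1}\lambda_j^2=1$ at lattice points) are all accurate refinements of the same argument.
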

\begin{proof}
Note that, for $k=0,1,2,$
$$
\frac{4\wp^k}{P(\wp)}=\gamma_1^{-1}\lambda_1^k(\wp-\lambda_1)^{-1}+
\gamma_2^{-1}\lambda_2^k(\wp-\lambda_2)^{-1}+
\gamma_3^{-1}\lambda_3^k(\wp-\lambda_3)^{-1},
$$
from which the lemma follows.
\end{proof}
Recall that $\mathfrak{p}(z)=\wp(z-t_0)$ and
\begin{equation}\label{5.6}
\beta(z)=\frac{8e^{i\phi}}{5\omega_{\mathbf{a}}}(\beta_0+\mathfrak{b}(z-t_0)),
\quad \mathfrak{b}(\sigma)=-\frac 54 e^{-i\phi}J_{\mathbf{a}}\sigma 
+\frac{\vartheta'}{\vartheta}\Bigl(\frac{\sigma}{\omega_{\mathbf{a}}} +\nu,
\tau\Bigr).
\end{equation}
Then we have the following.
\begin{lem}\label{lem5.3}
Let $\alpha_0 \in \mathbb{C}$ be a given number. Then
$$
\int^s_{\infty} (\mathfrak{b}(\sigma-\alpha_0)+\mathfrak{b}(\sigma+\alpha_0)
)_{\sigma}\mathfrak{b}(\sigma) \frac{d\sigma}{\tilde{\sigma}} \ll s^{-1},
\quad \tilde{\sigma}=\sigma+t_0
$$
as $x=e^{i\phi} (\tfrac 54e^{-i\phi}s)^{4/5} \to \infty$ through $\check
{\mathcal{D}}_{\mathrm{cut}}(\phi,t_{\infty},c,\varepsilon)$, in which 
the integral on the left-hand side is convergent.
\end{lem}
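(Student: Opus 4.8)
The plan is to show that the integrand's numerator $B'(\sigma)\mathfrak{b}(\sigma)$, where $B(\sigma):=\mathfrak{b}(\sigma-\alpha_0)+\mathfrak{b}(\sigma+\alpha_0)$, admits a primitive $W(\sigma)$ that stays bounded along the integration path in $\check{\mathcal{D}}_{\mathrm{cut}}(\phi,c,\varepsilon)$; a single integration by parts against $\tilde{\sigma}^{-1}$ then produces the $s^{-1}$ bound. First, from \eqref{5.2} together with \eqref{5.6} one has $\mathfrak{b}'(\sigma)=C-\omega_{\mathbf{a}}\wp(\sigma)$ for a constant $C$, so $\mathfrak{b}$ is a primitive of an elliptic function and $\mathfrak{b}(\sigma)=C\sigma+\omega_{\mathbf{a}}\zeta_{\mathrm{W}}(\sigma)+\text{const}$, with $\zeta_{\mathrm{W}}$ the Weierstrass zeta-function, $\zeta_{\mathrm{W}}'=-\wp$. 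The $\pm\alpha_0$ symmetry of $B$ is crucial: by the addition formula $\zeta_{\mathrm{W}}(\sigma-\alpha_0)+\zeta_{\mathrm{W}}(\sigma+\alpha_0)=2\zeta_{\mathrm{W}}(\sigma)+\wp'(\sigma)/(\wp(\sigma)-\wp(\alpha_0))$, the linear and $\zeta_{\mathrm{W}}$-parts recombine and $B$ collapses to the compact elliptic form
$$
B(\sigma)=2\mathfrak{b}(\sigma)+\omega_{\mathbf{a}}\frac{\wp'(\sigma)}{\wp(\sigma)-\wp(\alpha_0)}.
$$

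Since the numerator is $B'\mathfrak{b}$, integration by parts gives $\int B'\mathfrak{b}\,d\sigma=B\mathfrak{b}-\int B\mathfrak{b}'\,d\sigma$, and it remains to integrate $B\mathfrak{b}'$. The collapsed form makes this explicit: $\int 2\mathfrak{b}\mathfrak{b}'\,d\sigma=\mathfrak{b}^2$, while writing $\mathfrak{b}'=(C-\omega_{\mathbf{a}}\wp(\alpha_0))-\omega_{\mathbf{a}}(\wp-\wp(\alpha_0))$ turns $\omega_{\mathbf{a}}\int \wp'\mathfrak{b}'/(\wp-\wp(\alpha_0))\,d\sigma$ into an explicit combination of $\log(\wp(\sigma)-\wp(\alpha_0))$ and $\wp(\sigma)$. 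Thus a primitive of the numerator is exhibited as a linear combination of $B\mathfrak{b}$, $\mathfrak{b}^2$, $\wp(\sigma)$ and $\log(\wp(\sigma)-\wp(\alpha_0))$. By \cite[Theorem 4]{K-3} both $\mathfrak{b}(\sigma)$ and $\wp(\sigma)$ are bounded in the strip, and deleting the $\varepsilon$-discs about the relevant $t^{m,n}_j$ keeps $\wp(\sigma)$ away from $\infty$ and from $\wp(\alpha_0)$; the logarithm is single-valued precisely because of the cuts defining $\check{\mathcal{D}}_{\mathrm{cut}}(\phi,c,\varepsilon)$ (cf. Remark \ref{rem2.1}). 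Hence $W(\sigma)=O(1)$ along the contour.

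With $W'=B'\mathfrak{b}$ and $W=O(1)$, a final integration by parts gives
$$
\int^s_{\infty}B'\mathfrak{b}\,\frac{d\sigma}{\tilde{\sigma}}=\Bigl[\frac{W(\sigma)}{\tilde{\sigma}}\Bigr]^s_{\infty}+\int^s_{\infty}\frac{W(\sigma)}{\tilde{\sigma}^{2}}\,d\sigma,
$$
where the contribution at $\infty$ vanishes since $W=O(1)$ and $|\tilde{\sigma}|\to\infty$, the term at $s$ is $O(s^{-1})$, and the last integral is $O(\int^s_{\infty}|\sigma|^{-2}|d\sigma|)=O(s^{-1})$; convergence of the original integral is read off from the same identity. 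The main obstacle is the boundedness of $W$: a termwise primitive of the product of the linearly growing $\zeta_{\mathrm{W}}$- and $\sigma$-parts carries secular growth, and only the $\pm\alpha_0$ symmetry—via the collapse of $B$ to $2\mathfrak{b}+\omega_{\mathbf{a}}\wp'/(\wp-\wp(\alpha_0))$—combined with the boundedness of $\mathfrak{b}$ and $\wp$ from \cite[Theorem 4]{K-3} forces these secular terms to cancel. A secondary point is keeping $\log(\wp-\wp(\alpha_0))$ single-valued, which is exactly what the cuts of $\check{\mathcal{D}}_{\mathrm{cut}}(\phi,c,\varepsilon)$ provide.
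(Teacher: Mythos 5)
Your algebraic preparation is correct and even elegant: from \eqref{5.2} and \eqref{5.6} one indeed gets $\mathfrak{b}'(\sigma)=C-\omega_{\mathbf{a}}\wp(\sigma)$, the addition formula for the Weierstrass zeta-function does collapse $B(\sigma)=\mathfrak{b}(\sigma-\alpha_0)+\mathfrak{b}(\sigma+\alpha_0)$ to $2\mathfrak{b}(\sigma)+\omega_{\mathbf{a}}\wp'(\sigma)/(\wp(\sigma)-\wp(\alpha_0))$, and your primitive $W$ of $B'\mathfrak{b}$ is correctly computed as a combination of $B\mathfrak{b}$, $\mathfrak{b}^2$, $\wp$ and $\log(\wp-\wp(\alpha_0))$, the last with a coefficient that is generically nonzero (one finds $C=-\tfrac34 A_{\phi}e^{-i\phi}\omega_{\mathbf{a}}$, so the coefficient vanishes only if $\wp(\alpha_0)=-\tfrac34A_{\phi}e^{-i\phi}$). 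The gap is the claim $W=O(1)$. For the terms $B\mathfrak{b}$, $\mathfrak{b}^2$, $\wp$ boundedness does follow from \cite[Theorem 4]{K-3} and the excluded discs, but for the logarithm you justify only (i) that $|\wp-\wp(\alpha_0)|$ is bounded above and below, and (ii) single-valuedness on the simply connected cut domain. These control $\re\log(\wp-\wp(\alpha_0))$ but say nothing about $\im\log(\wp-\wp(\alpha_0))=\arg(\wp-\wp(\alpha_0))$: on a strip-like simply connected domain a single-valued branch of the argument can grow linearly even though the modulus is pinched between two constants (take $f(\sigma)=e^{i\sigma}$ on $|\im\sigma|<c$). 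Concretely, $\arg(\wp-\wp(\alpha_0))$ changes by $2\pi$ times the winding the path accumulates as it threads past the infinitely many zeros and poles of $\wp-\wp(\alpha_0)$ in the strip, each passed on the side dictated by the cuts, and nothing you cite forces these contributions to cancel over long stretches. This is not a removable technicality: possible secular growth of the argument is exactly the same kind of obstruction you set out to eliminate, relocated into the logarithm, and excluding it is a global statement about the Boutroux lattice relative to the strip, not a consequence of the local bounds from \cite{K-3}. (A secondary, shared caveat: for general $\alpha_0$ the level set $\wp(\sigma)=\wp(\alpha_0)$ sits at $\sigma\equiv\pm\alpha_0$, which the discs do not exclude; both your claim (ii) and the paper's assertion that $\mathfrak{b}(\sigma\pm\alpha_0)$ is bounded implicitly need $\pm\alpha_0$ congruent to a half-period, as in the application in Lemma \ref{lem5.4}.)

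The paper's proof is designed precisely so that no logarithm, and hence no winding control, ever appears. It takes a sequence $s_{\nu}\to\infty$, integrates $\int_{s_{\nu}}^{s}\mathfrak{b}(\sigma-\alpha_0)_{\sigma}\mathfrak{b}(\sigma)\tilde{\sigma}^{-1}d\sigma$ by parts, and changes variable $\sigma\mapsto\sigma+\alpha_0$, which converts this piece into $-\int_{s_{\nu}}^{s}\mathfrak{b}(\sigma+\alpha_0)_{\sigma}\mathfrak{b}(\sigma)\tilde{\sigma}^{-1}d\sigma+O(s^{-1})+O(s_{\nu}^{-1})$; adding the two halves of the integrand, the symmetrized integral is $O(s^{-1})+O(s_{\nu}^{-1})$, and the limit $s_{\nu}\to\infty$ yields both the convergence and the bound, using only the boundedness of $\mathfrak{b}$ and $\mathfrak{b}(\cdot\pm\alpha_0)$ from \cite[Theorem 4]{K-3}. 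In other words, the $\pm\alpha_0$ symmetry is exploited directly at the level of the weighted integral, never through an explicit bounded primitive. To repair your argument you would have to prove that $\arg(\wp-\wp(\alpha_0))$ is uniformly bounded along $\check{\mathcal{D}}_{\mathrm{cut}}(\phi,c,\varepsilon)$; as it stands, that unproven claim is doing all the work.
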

\begin{proof}
Let $x_{\nu}=e^{i\phi}(\tfrac 54 e^{-i\phi}s_{\nu})^{4/5}$ be any sequence
tending to $\infty$ through $\check{\mathcal{D}}_{\mathrm{cut}}(\phi,t_{\infty}
,c,\varepsilon)$. Note that $\mathfrak{b}(s)$ and $\mathfrak{b}(s\pm \alpha_0)$
are bounded in 
$\check{\mathcal{D}}_{\mathrm{cut}}(\phi,t_{\infty},c,\varepsilon)$.
Integration by parts leads to
\begin{align*}
\int^s_{s_{\nu}}\mathfrak{b}(\sigma-\alpha_0)_{\sigma} 
\mathfrak{b}(\sigma)\frac{d\sigma}{\tilde{\sigma}}
=& \Bigl[ \mathfrak{b}(\sigma-\alpha_0)\mathfrak{b}(\sigma)\tilde{\sigma}^{-1}
\Bigr]^s_{s_{\nu}}
\\
&-\int^s_{s_{\nu}} \mathfrak{b}(\sigma-\alpha_0)\mathfrak{b}_{\sigma}(\sigma)
\frac{d\sigma}{\tilde{\sigma}}
+\int^s_{s_{\nu}} \mathfrak{b}(\sigma-\alpha_0)\mathfrak{b}(\sigma)
\frac{d\sigma}{\tilde{\sigma}^2}
\\
=&-\int^{s-\alpha_0}_{s_{\nu}-\alpha_0} \mathfrak{b}(\sigma)
\mathfrak{b}_{\sigma}(\sigma+\alpha_0)
\frac{d\sigma}{\tilde{\sigma}+\alpha_0} +O(s^{-1})+O(s_{\nu}^{-1})
\\
=&-\int^{s}_{s_{\nu}}
\mathfrak{b}(\sigma+\alpha_0)_{\sigma} \mathfrak{b}(\sigma)
\frac{d\sigma}{\tilde{\sigma}} +O(s^{-1})+O(s_{\nu}^{-1}).
\end{align*}
The passage to the limit $s_{\nu} \to \infty$ leads to the lemma.
\end{proof}
By Lemma \ref{lem5.1} with $g_2=-2e^{i\phi},$ we may write
\begin{align*}
\frac 1{P(\wp(z))} =& -\frac 1{4\omega_{\mathbf{a}}} \frac d{dz}
\Bigl( \frac{\gamma_1^{-2}}2 (\mathfrak{b}(z-\omega_{\mathbf{a}}/2)
 +\mathfrak{b}(z+\omega_{\mathbf{a}}/2))
\\
&+ \frac{\gamma_2^{-2}}2 (\mathfrak{b}(z-\omega_{\mathbf{a}}\nu)
 +\mathfrak{b}(z+\omega_{\mathbf{a}}\nu))
+ \frac{\gamma_3^{-2}}2 (\mathfrak{b}(z-\omega_{\mathbf{a}}\tau/2)
 +\mathfrak{b}(z+\omega_{\mathbf{a}}\tau/2)) \Bigr).
\end{align*}
Substituting $z$ by $z-t_0$ and using Lemma 5.3 we have the following.
\begin{lem}\label{lem5.4}
In $\check{\mathcal{D}}_{\mathrm{cut}}(\phi,t_{\infty},c,\varepsilon)$,
$$
\int^z_{\infty} \frac{\mathfrak{b}(\zeta-t_0)}{P(\mathfrak{p}(\zeta))} \frac{d\zeta}
{\zeta} \ll z^{-1},
$$
in which the integral on the left-hand is convergent.
\end{lem}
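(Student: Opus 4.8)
The plan is to reduce the integral directly to Lemma \ref{lem5.3} by means of the decomposition of $P(\wp(z))^{-1}$ recorded immediately before the statement. Replacing $z$ by $z-t_0$ there, and recalling $\mathfrak{p}(z)=\wp(z-t_0)$, I obtain
\begin{equation*}
\frac 1{P(\mathfrak{p}(z))}=-\frac 1{4\omega_{\mathbf{a}}}\frac{d}{dz}\sum_{j=1}^{3}\frac{\gamma_j^{-2}}{2}\bigl(\mathfrak{b}((z-t_0)-\alpha_j)+\mathfrak{b}((z-t_0)+\alpha_j)\bigr),
\end{equation*}
where $\alpha_1=\omega_{\mathbf{a}}/2$, $\alpha_2=\omega_{\mathbf{a}}\nu$, $\alpha_3=\omega_{\mathbf{a}}\tau/2$, and where differentiation in $z$ coincides with differentiation in the argument. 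Inserting this into the integrand and multiplying by $\mathfrak{b}(\zeta-t_0)/\zeta$ splits the integral into three pieces carrying the constant weights $-\gamma_j^{-2}/(8\omega_{\mathbf{a}})$.

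For each $j$ I would change the variable by $\sigma=\zeta-t_0$, so that $\tilde{\sigma}=\sigma+t_0=\zeta$ and $d\sigma=d\zeta$; since $(\,\cdot\,)_\zeta=(\,\cdot\,)_\sigma$, the $j$-th piece becomes exactly
\begin{equation*}
\int^{z-t_0}_{\infty}\bigl(\mathfrak{b}(\sigma-\alpha_j)+\mathfrak{b}(\sigma+\alpha_j)\bigr)_{\sigma}\,\mathfrak{b}(\sigma)\,\frac{d\sigma}{\tilde{\sigma}},
\end{equation*}
which is precisely the integral treated in Lemma \ref{lem5.3} with $\alpha_0=\alpha_j$ and $s=z-t_0$. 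Lemma \ref{lem5.3} supplies both the convergence and the bound $\ll (z-t_0)^{-1}$ as $x\to\infty$ through $\check{\mathcal{D}}_{\mathrm{cut}}(\phi,c,\varepsilon)$. Since $t_0$ is a fixed constant, $z-t_0\asymp z$ on these strips, so each piece is $\ll z^{-1}$; summing the three contributions with their bounded coefficients yields $\int^z_{\infty}\mathfrak{b}(\zeta-t_0)P(\mathfrak{p}(\zeta))^{-1}\zeta^{-1}\,d\zeta\ll z^{-1}$, and the convergence of the left-hand integral follows from that of the three pieces.

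The points that need care are bookkeeping ones rather than genuine obstacles. One must verify that the three shift parameters $\alpha_j$, coming from the half-periods $\omega_1=\omega_{\mathbf{a}}/2$, $\omega_2=-\omega_{\mathbf{a}}\nu$ and $\omega_3=\omega_{\mathbf{a}}\tau/2$, are constants independent of $z$, so that Lemma \ref{lem5.3} applies verbatim with $\alpha_0$ fixed; that the symmetric combination $\mathfrak{b}(\sigma-\alpha_j)+\mathfrak{b}(\sigma+\alpha_j)$ matches the hypothesis of that lemma; and that the passage $z-t_0\asymp z$ is legitimate on $\check{\mathcal{D}}_{\mathrm{cut}}(\phi,c,\varepsilon)$. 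Granting these, the estimate is immediate, and no analysis beyond Lemma \ref{lem5.3} is required.
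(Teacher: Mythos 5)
Your proposal is correct and follows essentially the same route as the paper: the paper likewise takes the decomposition of $1/P(\wp(z))$ from Lemma \ref{lem5.1} into symmetric combinations $\mathfrak{b}(z-\alpha_j)+\mathfrak{b}(z+\alpha_j)$ over the half-periods, substitutes $z\mapsto z-t_0$, and invokes Lemma \ref{lem5.3} term by term. Your write-up merely makes explicit the bookkeeping (change of variable $\sigma=\zeta-t_0$, identification $\tilde{\sigma}=\zeta$, and $z-t_0\asymp z$) that the paper compresses into the single sentence preceding the lemma.
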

\begin{lem}\label{lem5.5}
In $\check{\mathcal{D}}_{\mathrm{cut}}(\phi,t_{\infty},c,\varepsilon)$,
$$
\int^z_{\infty} \frac 1{P(\mathfrak{p}(\zeta))} \frac{d\zeta}{\zeta} \ll z^{-1},
\,\,\,
\int^z_{\infty} \frac 1{P(\mathfrak{p}(\zeta))^2} \frac{d\zeta}{\zeta^2} =-\gamma_0 z^{-1}
+O(z^{-2}),
\,\,\,
\int^z_{\infty} \frac{\mathfrak{p}(\zeta)\mathfrak{p}'(\zeta)}{P(\mathfrak{p}(\zeta))} \frac{d\zeta}
{{\zeta}} \ll z^{-1},
$$
where $\gamma_0=-5(8e^{3i\phi}+27A_{\phi}^2)^{-1}.$
\end{lem}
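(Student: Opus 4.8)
The plan is to reduce each of the three estimates to an integration by parts of the type already carried out for Lemma \ref{lem5.4}, after exhibiting the integrand as the sum of a constant and the $\zeta$-derivative of a function that is bounded on $\check{\mathcal{D}}_{\mathrm{cut}}(\phi,c,\varepsilon)$. The guiding principle is that, tested against the decaying weights $\zeta^{-1}$ and $\zeta^{-2}$, the total-derivative part contributes only boundary terms of size $z^{-1}$ (resp.\ $z^{-2}$), while the constant part of the second integrand produces the explicit leading coefficient.

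For the first integral I would use the representation of $1/P(\wp(z))$ as $-\tfrac{1}{4\omega_{\mathbf{a}}}\tfrac{d}{dz}(\cdots)$ displayed just before Lemma \ref{lem5.4}; substituting $z\mapsto \zeta-t_0$ writes $1/P(\mathfrak{p}(\zeta))=\tfrac{d}{d\zeta}\Phi_1(\zeta)$ with $\Phi_1$ a fixed linear combination of the bounded functions $\mathfrak{b}(\zeta-t_0\pm\,\cdots)$, so the estimate follows by the same argument as in Lemma \ref{lem5.4}, now without the auxiliary factor $\mathfrak{b}$. For the second integral Lemma \ref{lem5.1} gives $1/P(\mathfrak{p}(\zeta))^2=\gamma_0+\tfrac{d}{d\zeta}\Phi_2(\zeta)$, where $\Phi_2$ is built from $(\partial_\zeta^2-48\lambda_j)$ applied to the same $\mathfrak{b}$-type terms; since $\mathfrak{b}',\mathfrak{b}'',\mathfrak{b}'''$ are, up to constants, $\wp,\wp',\wp''$ at shifted arguments, $\Phi_2$ is again bounded on the cut domain, where the discs of radius $\varepsilon$ about the poles $t^{m,n}_\infty$ are excluded. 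For the third integral I would invoke Lemma \ref{lem5.2} with $k=1$, giving $\mathfrak{p}\mathfrak{p}'/P(\mathfrak{p})=\tfrac{d}{d\zeta}L(\zeta)$ with $L(\zeta)=\tfrac14\sum_{j}\gamma_j^{-1}\lambda_j\log(\mathfrak{p}(\zeta)-\lambda_j)$.

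With these representations in hand I would integrate by parts over $[z_\nu,z]$ for an arbitrary sequence $z_\nu\to\infty$ in $\check{\mathcal{D}}_{\mathrm{cut}}(\phi,c,\varepsilon)$. For a bounded primitive $\Phi$ one has $\int_{z_\nu}^z\Phi'\zeta^{-k}\,d\zeta=\Phi\zeta^{-k}\big]_{z_\nu}^z+k\int_{z_\nu}^z\Phi\zeta^{-k-1}\,d\zeta$; letting $z_\nu\to\infty$ annihilates the boundary contribution at infinity and leaves a convergent integral of size $z^{-k}$. This yields the bound $\ll z^{-1}$ for the first and third integrals and $O(z^{-2})$ for the derivative part of the second. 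The constant $\gamma_0$ in the second integrand contributes $\gamma_0\int_\infty^z\zeta^{-2}\,d\zeta=-\gamma_0 z^{-1}$, which, together with $\gamma_0=5(g_2^3-27g_3^2)^{-1}=-5(8e^{3i\phi}+27A_{\phi}^2)^{-1}$ from the specialisation $g_2=-2e^{i\phi}$, $g_3=-A_{\phi}$, gives the asserted leading behaviour $-\gamma_0 z^{-1}+O(z^{-2})$.

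The main obstacle is the boundedness of the logarithmic primitive $L$ in the third case. Unlike $\Phi_1$ and $\Phi_2$, the function $L$ is a priori multi-valued; it is made single-valued precisely by the cuts $l(t^{m,n}_j)$ defining $\check{\mathcal{D}}_{\mathrm{cut}}$, and its real part is controlled because the excluded discs about $t^{m,n}_j$ and $t^{m,n}_\infty$ keep $\mathfrak{p}(\zeta)-\lambda_j$ bounded away from both $0$ and $\infty$. The delicate point is to bound the argument of $\mathfrak{p}(\zeta)-\lambda_j$: here I would exploit that $\mathfrak{p}=\wp(\,\cdot\,-t_0)$ is periodic along the direction of the strip, so that over each period block the increment of $\arg(\mathfrak{p}-\lambda_j)$ is the same bounded quantity and, by the simple connectedness of $\check{\mathcal{D}}_{\mathrm{cut}}$, no net winding accumulates. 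Once $L$ is shown to be bounded, the integration-by-parts step proceeds exactly as in the other two cases.
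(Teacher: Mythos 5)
Your proposal is correct and follows essentially the same route as the paper's own proof: Lemmas \ref{lem5.1} and \ref{lem5.2} (equivalently, for the first integral, the $\mathfrak{b}$-form of Lemma \ref{lem5.1} displayed before Lemma \ref{lem5.4}) exhibit each integrand as a constant plus the $\zeta$-derivative of a function bounded on $\check{\mathcal{D}}_{\mathrm{cut}}(\phi,c,\varepsilon)$, after which integration by parts against $\zeta^{-1}$ or $\zeta^{-2}$ gives the stated bounds, the constant $\gamma_0$ producing the leading term $-\gamma_0 z^{-1}$. One cosmetic slip: the boundedness of the primitive of $1/P(\mathfrak{p})^2$ rests on the excluded discs about the points $t_j^{m,n}$ $(j=1,2,3)$, where $\mathfrak{p}=\lambda_j$, rather than about $t_{\infty}^{m,n}$; your later discussion of the logarithmic primitive identifies the relevant exclusions correctly.
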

\begin{proof}
Let $\gamma_0z+F(z)$ be the primitive function of $1/P(\wp(z))^2$ given in
Lemma \ref{lem5.1}, where $F(z)$ is bounded in $\check{\mathcal{D}}_{\mathrm
{cut}}(\phi,t_{\infty},c,\varepsilon)$. Then
\begin{align*}
\int^z_{\infty} \frac 1{P(\mathfrak{p}(\zeta))^2} \frac{d\zeta}{{\zeta}^2}
=& \Bigl[ (F(\zeta-t_0)+\gamma_0\zeta)\frac 1{{\zeta}^{2}} \Bigl]^z_{\infty}
+2\int^z_{\infty}(F(\zeta-t_0)+\gamma_0\zeta) \frac{d\zeta}{{\zeta}^3}
\\
=& -\gamma_0 z^{-1}+ O(z^{-2}),
\end{align*}
which is the second integral. The remaining estimates are similarly obtained
by the use of Lemmas \ref{lem5.1} and \ref{5.2}.
\end{proof}
\section{Discussion}\label{sc6}
Our main results Theorems \ref{thm2.1} and \ref{thm2.2} are derived
by combining expressions \eqref{1.3} and \eqref{1.5} of $y(x)$ and of 
$B(\phi,t)$ in Theorems A and B, respectively,
with analysis on the integral equations (associated with system \eqref{3.1}) 
in Proposition \ref{prop3.3} via \eqref{4.1}. 
The formulas of Theorems A and B with the error bounds $O(t^{-\delta})$ 
have been obtained in \cite{K1} and \cite{K-3}
by WKB analysis on isomonodromy linear system \eqref{1.2}. 
Our approach to the error term is said to be purely classical iterative 
calculation via \eqref{4.1} by the use of \eqref{1.3} and \eqref{1.5}.
This is just in the situation as referred to in \cite[Remark 1]{K-3}.  
The essential part of this procedure owes much to the structure of the
integral relation \eqref{4.1} containing the term 
$$
\int^z_{\infty} F(\mathfrak{p},\beta) \frac{d\zeta}{\zeta} \ll z^{-1},
$$  
in which the key to the estimate is Lemma \ref{lem5.3}.  
The corresponding error $O(t^{-1})$, say in \cite{I-K}, is a quantity arising in 
constructing the solution of the related Riemann-Hilbert problem. 
\par
It may be conjectured that the error bound of \eqref{1.5} in Theorem B 
is $O(t^{-1})$. If this is valid, then, in Theorem \ref{thm2.2}, the error
bound of the expression for $h(z)$ is $O(z^{-2}).$ 
For the
fifth Painlev\'e transcendents the corresponding error bound estimates
are valid \cite{S1}, \cite{S}.


\end{document}